\documentclass[11pt]{amsart}
\usepackage{bbm}
\usepackage{amsfonts}
\usepackage{amsmath}
\allowdisplaybreaks
\usepackage{amssymb}
\usepackage{fancyhdr}
\usepackage{setspace}
\usepackage{latexsym}
\usepackage{mathrsfs}
\usepackage{amsthm}

\usepackage{cite,enumitem,graphicx}
\usepackage[backref,colorlinks,linkcolor=red,anchorcolor=green,citecolor=blue]{hyperref}
\usepackage{cleveref}
\usepackage[english]{babel}

\numberwithin{equation}{section}
\newtheorem{theorem}{Theorem}[section]
\newtheorem{proposition}[theorem]{Proposition}
\newtheorem{lemma}[theorem]{Lemma}
\newtheorem{corollary}[theorem]{Corollary}
\newtheorem{definition}[theorem]{Definition}
\newtheorem{remark}[theorem]{Remark}
\newtheorem{example}[theorem]{Example}

\newcommand{\R}{\mathbb R}

\newcommand{\bt}{\begin{theorem}}
\newcommand{\et}{\end{theorem}}
\newcommand{\bl}{\begin{lemma}}
\newcommand{\el}{\end{lemma}}
\newcommand{\bd}{\begin{definition}}
\newcommand{\ed}{\end{definition}}
\newcommand{\bc}{\begin{corollary}}
\newcommand{\ec}{\end{corollary}}
\newcommand{\bp}{\begin{proof}}
\newcommand{\ep}{\end{proof}}
\newcommand{\bx}{\begin{example}}
\newcommand{\ex}{\end{example}}
\newcommand{\bi}{\begin{exercise}}
\newcommand{\ei}{\end{exercise}}
\newcommand{\bo}{\begin{proposition}}
\newcommand{\eo}{\end{proposition}}
\newcommand{\br}{\begin{remark}}
\newcommand{\er}{\end{remark}}
\newcommand{\be}{\begin{equation}}
\newcommand{\ee}{\end{equation}}
\newcommand{\ba}{\begin{align}}
\newcommand{\ea}{\end{align}}
\newcommand{\bn}{\begin{enumerate}}
\newcommand{\en}{\end{enumerate}}
\newcommand{\bg}{\begin{align*}}
\newcommand{\bcs}{\begin{cases}}
\newcommand{\ecs}{\end{cases}}

\newcommand{\bean}{\begin{eqnarray*}}
\newcommand{\eean}{\end{eqnarray*}}
\newcommand{\loc}{\operatorname{\rm loc}}

\begin{document}

\title[Comparison principles for subquadratic fractional $p$-Laplacian equation]{Comparison principles and symmetry for subquadratic fractional $p$-Laplacian equations}
\date{}

\author{Dong Ye}
\address{School of Mathematical Sciences,  Key Laboratory of MEA (Ministry of Education) \& Shanghai Key Laboratory of PMMP,  East China Normal University, Shanghai 200241, China}
\email{dye@math.ecnu.edu.cn}
\author{Weimin Zhang}
\address{School of Mathematical Sciences, Zhejiang Normal University, Jinhua 321004, China}
\email{zhangweimin2021@gmail.com}
\maketitle

\begin{abstract}
 This paper establishes a new comparison principle framework for the subquadratic fractional $p$-Laplacian, i.e.~$1 < p < 2$
 under minimal regularity assumptions, that has remained a significant challenging issue due to the singularity of the operator. Our results provide the essential analytical tools required for the moving plane method in this setting. We prove first a weak comparison principle for $(-\Delta)_p^s u = f(u)$ in bounded domains with sufficiently small measure, where only the boundedness of the weak solution is required. More importantly, we establish a strong comparison principle for continuous weak solutions in the parameter range $s \in (0, \frac{1}{2})$ and $\frac{1}{1-s} < p < 2$. Our proof introduces a localized barrier function and does not require any H\"{o}lder regularity of the weak solution, nor any smoothness of the domain. This presents a substantial contrast over previous study, which relied heavily on H\"older or even $C^{1,1}$ regularity.
As a direct application, we employ these comparison principles to prove the symmetry of weak solutions to $(-\Delta)_p^s u = f(u)$ under mild assumptions, which significantly extend existing symmetry theories for nonlocal quasilinear equations.
 \end{abstract}

\textbf{AMS Subject Classification:} 35R11, 35B51, 35B06

\textbf{Keywords:} Subquadratic fractional $p$-Laplacian, Weak and strong comparison principles, Symmetry.

\section{Introduction}\label{s1}

The qualitative analysis of solutions to nonlinear elliptic equations, particularly their geometric properties such as radial symmetry and monotonicity, constitutes a central theme in modern partial differential equations. A highly elegant and robust mathematical framework to address these questions is the moving-plane method, initiated by Alexandrov \cite{A_AMPA1961} and further developed by Serrin \cite{S_ARMA1971} in classical potential theory. For semilinear elliptic equations, Gidas-Ni-Nirenberg \cite{GNN_CMP1979} established symmetry properties in bounded smooth domains by developing maximum principles in narrow domains. Berestycki-Nirenberg \cite[Proposition 1.1]{BN_BSBM1991} improved the  maximum principle from narrow domains to more general domains with small measure, allowing the moving-plane procedure with weaker regularity assumptions on the domain and the solution. 

\smallskip
When shifting to the quasilinear regime governed by the $p$-Laplacian  $\Delta_p u := \text{div}(|\nabla u|^{p-2}\nabla u)$, $p\ne 2$, the structural nonlinearity of the operator introduces serious analytical difficulties due to its degenerate ($p>2$) or singular ($1<p<2$) behavior. In the absence of linearity, the classical maximum principles should be replaced by weak or/and strong comparison principles.  The strong comparison principle is a very hard issue in the quasilinear case. For example, let $u, v$ be ordered $p$-harmonic functions, i.e. $\Delta_p u = \Delta_pv = 0$ and $u \ge v$ in domain $\Omega \subset \R^N$, we wonder if the alternative $u > v$ in $\Omega$ and $u\equiv v$ holds true. The only general affirmative answer was given by Manfredi \cite{M_PAMS88} in the plane $\R^2$, and the situations for $p \ne 2$,  $N\ge 3$ remain a famous longstanding open question.

The researchers seek then the strong comparison principle for $p$-Laplacian problem under suitable assumptions. Consider
\begin{equation*}
-\Delta_{p}u-\Lambda u\ge-\Delta_{p}v-\Lambda v\quad \mbox{and}\quad u \ge v\quad \text{in } U, 
\end{equation*}
for some constant $\Lambda \ge 0$, we ask if there holds either $u\equiv v$ or $u > v$ in $U$. Let $u, v \in C^1(\overline{\Omega})$. For $\Lambda = 0$, Tolksdorf \cite{T_CPDE1983} and Guedda-V\'eron \cite[Proposition 2.1]{GV_NA1989} derived the strong comparison principle under the restriction that   $\nabla v$ never vanishes in $U$ or $\{x\in U: u(x)=v(x)\}$ is compact. For general $\Lambda \ge 0$, Damascelli \cite{D_AIHP1998} established a strong comparison principle on each connected component of  $U\backslash Z$, where $Z=\{x\in U: \nabla u(x)= \nabla v(x)= 0\}$. 

Applying the comparison principles, for $p > 2$, Damascelli-Sciunzi \cite{DS_JDE2004, DS_CVPDE2006} considered the quasilinear equation 
\begin{equation}\label{2606180023}
\begin{aligned}
\begin{cases}
-\Delta_p u =f(u) &~\mbox{in}\; \Omega,\\
u=0 &\mbox{on}\; \partial \Omega.\\
\end{cases}
\end{aligned}
\end{equation} 
They proved the symmetry of positive solutions if $f \in C^{0,1}_{loc}$ and $f(t) > 0$ for $t > 0$. It is worthy to point out that there exist non-radial positive solutions on $\Omega=B_1$ if $p>2$ and without sign constraint on $f$, see \cite{B_PIAS2000, KS_N1990}. 

For $1<p<2$, Damascelli-Pacella \cite{D_AIHP1998, DP_ASNSPCS1998}  established symmetry and monotonicity properties of positive solutions for subquadratic $p$-Laplacian equation \eqref{2606180023}, assuming that $f \in C^{0,1}_{loc}$ and $u \in C^1(\overline\Omega)$. 

\smallskip
Recently, there are rapid growth of interests for nonlocal, integro-differential problems, see for example \cite{BT_APDE2025, KKL_JMPA2019, CLL_AM2017, GJS_arXiv2025, GL_MA2024, JW_AMPA2016, RS_CVPDE2014, BLS_AM2018, DN_APDE2025,  IMS_RMI2016} and references therein. A typical model is the fractional $p$-Laplacian operator $(-\Delta)_p^s$ with $s\in (0, 1)$ and $p>1$, whose weak formulation is given in \eqref{2608021653} below. If $u\in C_c^2(\mathbb{R}^N)$, for either $1<p\le \frac{2}{2-s}$ and $ \nabla u(x_0)\neq 0$, or $p>\frac{2}{2-s}$, $(-\Delta)_p^s u$ admits a pointwise principal-value representation
\begin{equation}\label{2608021824}
(-\Delta)_p^s u(x_0)=2\,\underset{\varepsilon\to 0}{\lim}\int_{\mathbb{R}^N\backslash B_{\varepsilon}(x_0)}\frac{|u(x)-u(y)|^{p-2}(u(x)-u(y))}{|x-y|^{N+sp}}{\rm d}y.
\end{equation}
See \cite[Lemma 2.6]{KKL_JMPA2019}. 

\smallskip
To implement the moving plane procedure for non-negative solutions to
\begin{equation}\label{main}
\begin{aligned}
\begin{cases}
(-\Delta)_p^s u= f(u)\quad&\mbox{in}\; \Omega\\
u=0&\mbox{in}\; \mathbb{R}^N\backslash \Omega,\\
\end{cases}
\end{aligned}
\end{equation}
one needs the comparison principles between $u$ and its reflection. Let 
\begin{equation}\label{2412171452}
\begin{cases}
\begin{aligned}
(-\Delta)_p^s u-(-\Delta)_p^s u^*&\ge V(x)(u-u^*) \quad &&\mbox{in}\; U,\\
u&\ge u^* &&\mbox{in}\; \mathbb{R}^N_+\backslash U,\\
\end{aligned}
\end{cases}
\end{equation}
where $V\in L_{loc}^1(U)$, $\mathbb{R}^N_+=\{(x', x_N)\in \mathbb{R}^{N-1}\times \mathbb{R}: x_N>0 \}$ is the upper half-space, $U\subset \mathbb{R}^N_+$ is a bounded open set. Here $u^*(x):=u(x^*)$, and $x^*=(x', -x_N)$ for any $x=(x', x_N)\in \mathbb{R}^N_+$. 

Due to the global coupling of the nonlocal tail, we can imagine that the comparison between $u$ and its reflected profile $u^*$ is highly non-trivial. 

\smallskip 
Consider first $p=2$, i.e. the linear fractional Laplacian $(-\Delta)^s$ situation. Ros-Oton--Serra \cite[Lemma 5.1]{RS_CVPDE2014}, Jarohs-Weth \cite[Proposition 3.5]{JW_AMPA2016} proved the weak comparison principle  for  \eqref{2412171452} in domains with small measure. Furthermore, Jarohs-Weth \cite[Proposition 3.6]{JW_AMPA2016} proved the strong comparison principle for antisymmetric functions, allowing to establish the symmetric property of non-negative solutions to
\eqref{main} with $p=2$, bounded and convex domain $\Omega$ symmetric with respect to some hyperplane, and $f\in C_{\loc}^{0, 1}$. 

On the other hand, Chen-Li-Li \cite{CLL_AM2017} worked  on $C^{1, 1}_{loc}$ solutions $u$ so that $(-\Delta)^s u$ has an integral representation as in \eqref{2608021824}, and they
  can carry out the method of moving plane in bounded or unbounded domains. For further related results on strong comparison principles  for  \eqref{2412171452} with $p=2$, we also refer to \cite{BMS_JAM2018, GP_JAM2025} and references therein.

\medskip
 For the equations \eqref{main} and \eqref{2412171452} with $p \neq 2$, the problem is more subtle since we combine the nonlocal effect with the nonlinear differential operator. Early symmetry and monotonicity results relied heavily on the direct moving-plane method of Chen-Li \cite{CL_AM2018}, and required $C^{1,1}_{\text{loc}}$ regularity of the solution $u$. 
 
 However, it is difficult to expect $C^{1, 1}_{loc}$ regularity for weak solutions to \eqref{main} in general. For instance, we have the H\"older regularity for solutions to fractional $p$-Laplacian equations (see \cite{ BLS_AM2018, DN_APDE2025, GL_MA2024, IMS_RMI2016}); the authors in \cite{LS_ArXiv2026, BL_AM2017, BDLM_JMPA2025, BDLMS_CVPDE2025} showed the corresponding Sobolev regularity results. 
 
 Recently, for $(s, p)$-harmonic functions, Giovagnoli-Jesus-Silvestre \cite{GJS_arXiv2025} show that the solutions of $(-\Delta)_p^s u=0$ are $C^{1, \alpha}$ regular if $2\le p<\frac{2}{1-s}$. Biswas-Topp \cite{BT_APDE2025}
investigate the H\"older regularity of $(-\Delta)_p^su = g\in L^\infty_{loc}$ for $p > 1$ and $s\in (0, 1)$, they prove that $u \in C^{0,\gamma}_{loc}$ with $\gamma = \min\{1, \frac{sp}{p-1}\}$, provided that $\frac{sp}{p-1} \ne 1$. These seem to be the best regularity results available for the fractional $p$-Laplacian. Notice that when $p \ne 2$, due to the limitation from the nonlinear operator, the higher regularity of $g$ beyond $g \in L^\infty$ seems not improve the H\"older exponent of $u$ in the known sharp theory. We remark also that the H\"older continuity could become increasingly weak as $s \to 0$.
 
 It is worth mentioning that even for $p$-harmonic functions, i.e. solutions to $-\Delta_p u =0$, we cannot hope $C^{1, 1}_{loc}$ smoothness generally. Iwaniec-Manfredi \cite{IM_RMI1989} gave the optimal $C^{1, \alpha}$ theory for $p$-harmonic functions in dimension two, with
 \[
\alpha_{\mathrm{opt}}(2,p)
=\frac{1}{6}\left(\frac{p}{p-1}+\sqrt{1+\frac{14}{p-1}+\frac{1}{(p-1)^2}}\right).
\]
Hence when $p>2$, there exist $p$-harmonic functions on the plane, which do not satisfy the $C^{1, 1}_{loc}$ regularity. To the best of our knowledge, the optimal regularity issue for $p$-harmonic functions in $N\ge 3$ remains widely open. 

\medskip
Consequently, it is natural to study solutions to fractional $p$-Laplacian equations with lower regularity assumption. 
 
 \smallskip
For $p>2$, Biswas-Roy-Sen \cite[Theorem 4.4]{BRS_arXiv2026} established the strong comparison principle for solution to 
\begin{equation}
\label{fractionalp1}
\begin{aligned}
\begin{cases}
(-\Delta)_p^s u-(-\Delta)_p^s u^*= f(u)-f(u^*)\quad&\mbox{in}\; U\subset \mathbb{R}^N_+\\
u\ge u^* &\mbox{in}\;  \mathbb{R}^N_+.\\
\end{cases}
\end{aligned}
\end{equation}
Here $u^*$, $U$ are the same as for \eqref{2412171452}, and $f \in C^{0, 1}_{loc}$. Their result can be summarized as follows.
\begin{itemize}
\item If $p>\max\big\{2, \frac{1}{1-s}\big\}$ and  $u$ is locally $\frac{sp}{p-1}+\varepsilon$-H{\"o}lder continuous for some $\varepsilon>0$, then either $u>u^*$ in $U$ or $u\equiv u^*$ in $\mathbb{R}^N$; 
\item If $2<p<\frac{2}{1-s}$ and $u\in C^1(U)$, then either $u>u^*$ in $\{x\in U: \nabla u^*(x)\neq 0\}$ or $u\equiv u^*$ in $\mathbb{R}^N$. 
\end{itemize}
Furthermore, they  proved that any weak solution to \eqref{main} satisfies the above regularity requirement. As application, when $\Omega$ is a strictly convex, bounded domain with $C^{1, 1}$ boundary, and symmetric with respect to some hyperplane, they showed the  symmetry and monotonicity for positive $C^1$ solutions to \eqref{main}. 

\smallskip
Despite the above developments, the strong comparison principle of weak solutions to \eqref{main} in the subquadratic regime $1<p<2$ remained widely open. One main obstacle is that the singularity of the operator for $p \in (1,2)$ makes integral estimates technically challenging.  As far as we are aware, Jarohs \cite{J_ANS2018} showed the strong comparison between super and sub-solutions to a special equation $(-\Delta)_p^s u = q(x)|u|^{p-2}u$ with $\frac{1}{1-s}< p < 2$. Iannizzotto-Mosconi-Papageorgiou \cite[Theorem 2.7]{IMP_MN23} established an interesting strong comparison between positive continuous super and sub-solutions to $(-\Delta)_p^s u = f(u)$ with general $p > 1$ and $f \in C(\R)\cap BV_{loc}(\R)$, however, the condition for the subsolution $v \in W_0^{s, p}(\Omega)$ prevents the application to the reflected part $u^*$.

\medskip
Our aim here is to establish weak and strong comparison principles for weak solutions to \eqref{2412171452} or \eqref{fractionalp1}, in the subquadratic regime (i.e. $p < 2$), under mild conditions on the regularity of the solution $u$, and the domain. 

We first establish the weak comparison principle in open sets with small measures,  it seems to be the first result of this flavor for the subquadratic fractional $p$-Laplacian setting, which is the nonlocal, quasilinear analogue of the classical narrow domain
principle and is an essential tool for the moving plane procedure. Moreover, we require only the boundedness of the weak solution.

Let us define the space for weak solutions as follows. Let $s\in (0, 1)$ and $p>1$,
\begin{equation}\label{2607282240}
\widetilde{W}^{s, p}(\Omega):=\left\{u\in L^p_{loc}(\mathbb{R}^N): \int_{\Omega}\int_{\mathbb{R}^N}\frac{|u(x)-u(y)|^{p}}{|x-y|^{N+sp}} {\rm d}x{\rm d}y<\infty\right\}.
\end{equation}
More precise definition of weak solution to \eqref{main} can be found in Section \ref{S_P}. For $U\subset \mathbb{R}^N_+$, we denote $\mathcal{T}(U)=\{x^*: x\in U\}$. 

\begin{theorem}\label{2508070852}
Let $N\ge 1$, $s\in (0, 1)$, $1<p< 2$, $U\subset \mathbb{R}^N_+$ be a bounded  open set, and $V\in L_{loc}^{1}(U)$ with $V_+(x)=\max\{V(x), 0\}\in L^{\infty}(U)$. Then there exists some $C>0$ depending only on $N, p$ and $s$ such that for any weak solution $u\in \widetilde{W}^{s, p}(U)\cap\widetilde{W}^{s, p}(\mathcal{T}(U))\cap L^{\infty}(\mathbb{R}^N)$ of  \eqref{2412171452} satisfying 
\begin{equation}\label{2603291622}
|U|^{\frac{sp}{N}}\|V_+\|_{L^{\infty}(U)}\|u\|_{L^{\infty}(\mathbb{R}^N)}^{2-p}<C,
\end{equation}
we have $u\ge u^*$ a.e. in $U$. 
\end{theorem}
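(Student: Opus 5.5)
We argue by testing the inequality in \eqref{2412171452} with the negative part of $w:=u-u^*$, localized to $U$. Set $\varphi:=(u^*-u)_+\chi_U$, i.e. $\varphi=w^-$ in $U$ and $\varphi=0$ elsewhere. Since $u\ge u^*$ in $\mathbb{R}^N_+\setminus U$ and $u\in\widetilde W^{s,p}(U)\cap\widetilde W^{s,p}(\mathcal T(U))\cap L^\infty$, the function $\varphi$ is an admissible test function. Writing $J_p(t)=|t|^{p-2}t$, the weak form gives
\[
\iint \frac{\big[J_p(u(x)-u(y))-J_p(u^*(x)-u^*(y))\big](\varphi(x)-\varphi(y))}{|x-y|^{N+sp}}\,{\rm d}x{\rm d}y\ \le\ -\int_U V w\,\varphi \le \|V_+\|_{L^\infty(U)}\int_U\varphi^2 .
\]
The sign of the right-hand side uses $w\varphi=-\varphi^2$.

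To treat the left-hand side, we split $\mathbb{R}^N\times\mathbb{R}^N$ using the reflection $y\mapsto y^*$ and fold every integral onto $\mathbb{R}^N_+\times\mathbb{R}^N_+$. After the change of variables, the integrand for $x,y\in\mathbb{R}^N_+$ becomes
\[
\big[J_p(u(x)-u(y))-J_p(u^*(x)-u^*(y))\big]\,(\varphi(x)-\varphi(y))\big(K(x-y)-K(x-y^*)\big)
\]
plus cross terms, where $K(z)=|z|^{-N-sp}$ and $K(x-y)\ge K(x-y^*)$. The cross terms come from the pairs $(x,y^*)$. For them we use that $\varphi$ vanishes on $\mathbb{R}^N_-$ together with $u\ge u^*$ outside $U$, and show they are nonnegative, as in Jarohs--Weth's argument for $p=2$. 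The key pointwise fact is the monotonicity of $J_p$: when $\varphi(x)>0$, the difference $u(x)-u^*(x)=-\varphi(x)$ is negative, so the sign pattern of $J_p(a)-J_p(b)$ can be controlled by the sign of $a-b=w(x)-w(y)$. The result is a lower bound of the form
\[
\mathrm{LHS}\ \ge\ c\iint_{\mathbb{R}^N_+\times\mathbb{R}^N_+}\big(|u(x)-u(y)|+|u^*(x)-u^*(y)|\big)^{p-2}|\varphi(x)-\varphi(y)|^2 K(x-y)\,{\rm d}x{\rm d}y,
\]
plus a nonnegative killing term $\int_U \varphi^2(x)\kappa(x)\,{\rm d}x$. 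This uses the standard subquadratic inequality $(J_p(a)-J_p(b))(a-b)\ge c_p(|a|+|b|)^{p-2}|a-b|^2$.

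Boundedness of $u$ now enters. For $p<2$ we have $(|a|+|b|)^{p-2}\ge (4\|u\|_{L^\infty})^{p-2}$, so the weight is bounded below. This yields
\[
\mathrm{LHS}\ge c\,\|u\|_{L^\infty(\mathbb{R}^N)}^{p-2}\Big(\iint_{\mathbb{R}^N\times \mathbb{R}^N}\frac{|\varphi(x)-\varphi(y)|^2}{|x-y|^{N+sp}}\,{\rm d}x{\rm d}y\Big),
\]
which is a fractional $H^{sp/2}$ Gagliardo seminorm of $\varphi$; here we use $sp/2<1$. Since $\varphi$ is supported in $U$, the Poincaré-type inequality for functions vanishing outside a set of finite measure gives
\[
[\varphi]^2_{H^{sp/2}}\ \ge\ c\,|U|^{-sp/N}\int_U \varphi^2 .
\]
This follows either from $\int_{U}\varphi^2(x)\int_{\mathbb{R}^N\setminus U}K(x-y)\,{\rm d}y\,{\rm d}x\ge c|U|^{-sp/N}\int_U\varphi^2$ or from the fractional Sobolev inequality combined with Hölder's inequality. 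Combining the estimates,
\[
c\,\|u\|_\infty^{p-2}|U|^{-sp/N}\int_U\varphi^2\le \|V_+\|_\infty\int_U\varphi^2 .
\]
Under \eqref{2603291622} with $C=c$, this forces $\varphi\equiv0$, i.e. $u\ge u^*$ a.e. in $U$.

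The main obstacle is the folding step. The difficulty is to verify that the contributions from pairs with $y\in\mathbb{R}^N_-$, or with $x\in U$ and $y\in\mathbb{R}^N_+\setminus U$, are nonnegative or can be absorbed, despite the nonlinearity of $J_p$, and without losing the lower bound involving $|\varphi(x)-\varphi(y)|^2$. My first attempt would be to compare $J_p(u(x)-u(y^*))-J_p(u^*(x)-u(y))$ with the corresponding term and use $K(x-y)-K(x-y^*)\ge0$, handling the regions $\{\varphi(y)=0\}$ and $\{\varphi(y)>0\}$ separately.
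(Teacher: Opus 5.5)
Your outline matches the paper's: test with $\varphi=w_-\mathbbm{1}_U$, bound minus the form from below by $\|u\|_{L^\infty}^{p-2}$ times an $H^{sp/2}$-type quantity, then apply a Poincar\'e bound of order $|U|^{-sp/N}$. However, the step you call ``the main obstacle'' is the entire substance of the proof (Lemma~\ref{2602021059}). You neither carry it out nor give a mechanism that works, and your claim that the pairs $(x,y^*)$ contribute with a favorable sign is false. Take $x\in U$ with $w(x)<0$ and $y\in\mathbb{R}^N_+$ with $w(y)\ge0$; this includes every $y\in\mathbb{R}^N_+\setminus U$. The reflected cross integrand is
\[
2\bigl[J_p(u(x)-u^*(y))-J_p(u^*(x)-u(y))\bigr]\,w_-(x)\,K(x-y^*).
\]
Its bracket has the sign of $w(x)+w(y)$, so it has the wrong sign whenever $w(y)>w_-(x)$. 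This already happens for $p=2$, where Jarohs--Weth avoid it by the linear regrouping $(w(x)-w(y))K(x-y)+(w(x)+w(y))K(x-y^*)=w(x)\bigl(K(x-y)+K(x-y^*)\bigr)-w(y)\bigl(K(x-y)-K(x-y^*)\bigr)$. For $p\neq 2$ the two brackets are different nonlinear expressions, so no factor $K(x-y)-K(x-y^*)$ can be pulled out. The inequality $(J_p(a)-J_p(b))(a-b)\ge c(|a|+|b|)^{p-2}|a-b|^2$ does not help either. It only applies where $\varphi(x)-\varphi(y)$ is a multiple of $a-b$, i.e.\ where both $w(x)<0$ and $w(y)<0$.

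The missing idea is to pair the $(x,y)$- and $(x,y^*)$-terms pointwise, write each bracket as $(p-1)\int|t|^{p-2}\,{\rm d}t$ over an interval, and split according to the sign of $w(x)+w(y)$.
\begin{itemize}
\item If $w(x)+w(y)\le 0$, the cross term is harmless. The direct term alone gives $\gtrsim\|u\|_{L^\infty}^{p-2}w_-(x)^2K(x-y)$, since $|w(x)-w(y)|\ge w_-(x)$.
\item If $w(x)+w(y)>0$, one has the ordering
\[
u(x)-u(y)\le u^*(x)-u(y)\le u(x)-u^*(y)\le u^*(x)-u^*(y),
\]
so the bad interval is nested in the good one. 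After using $K(x-y^*)\le K(x-y)$, the positive cross term is absorbed. What survives are two intervals of length $w_-(x)$ on which $|t|^{p-2}\ge(2\|u\|_{L^\infty})^{p-2}$.
\item When $w(x),w(y)<0$, both terms have the good sign and give $|w_-(x)-w_-(y)|^2K(x-y)+w_-(x)^2K(x-y^*)$.
\end{itemize}
Collecting the regions yields the killing term $\int_U w_-^2(x)\int_{\mathbb{R}^N\setminus U}K(x-y)\,{\rm d}y\,{\rm d}x$, plus the seminorm on $U\times U$. Your first Poincar\'e route, which is exactly the proof of Lemma~\ref{2603291635}, then finishes. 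Your Sobolev-plus-H\"older alternative needs $sp<N$, which may fail for $N=1$.

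Two smaller points:
\begin{itemize}
\item The double integral you call LHS is in fact $\le 0$, and the weak inequality bounds it from \emph{below} by $-\|V_+\|_{L^\infty(U)}\int_U\varphi^2$. Both of your displayed inequalities hold only with LHS replaced by its negative; the two sign errors cancel.
\item Admissibility of $\varphi$ needs proof, since $w_-$ can be nonzero in $\mathbb{R}^N_-$ where $\varphi$ vanishes. The paper handles this in Lemma~\ref{2602101903} using antisymmetry and $|x-y^*|\le|x-y|$.
\end{itemize}
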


Notice that if $V\le 0$ in $U$, the condition \eqref{2603291622} imposes no restriction on $|U|$. Furthermore, the above weak comparison holds for the non-autonomous case
\begin{equation*}\label{2412171452a}
\begin{cases}
\begin{aligned}
(-\Delta)_p^s u-(-\Delta)_p^s u^*&\ge \Lambda (u-u^*) + g(u^*, x)-g(u, x)\quad &&\mbox{in}\; U\subset \mathbb{R}^N_+,\\
u&\ge u^* &&\mbox{in}\; \mathbb{R}^N_+\backslash U,\\
\end{aligned}
\end{cases}
\end{equation*}
where $\Lambda\ge 0$, $g(t, x)$ is nondecreasing in $t$, and $u\in L^{\infty}(\mathbb{R}^N)$. Then there exists 
$$\delta=\delta(\|u\|_{ L^{\infty}(\mathbb{R}^N)}, \Lambda, N, p, s)>0$$ such that $u\ge u^*$ in $U$ whenever $|U|<\delta$.

This is a similar result as for the $p$-Laplacian case in \cite[Theorem 1.2(c)]{D_AIHP1998} and \cite[Theorem 2.1(b)]{GYZ_CAM2002}. Theorem \ref{2508070852} extends also the result for linear fractional Laplacian case (i.e.~$p=2$) in \cite[Lemma 5.1]{RS_CVPDE2014} to the fractional quasilinear case $1<p< 2$.

\begin{remark}
Theorem \ref{2508070852} allows to start the moving plane procedure, if the sectional curvatures are positive on $\partial\Omega$. As mentioned in \cite{dLN_JMPA1982}, the above arguments also work in general convex domains. Accordingly, for locally Lipschitz function $f$, the bounded positive solutions of  \eqref{main} are monotonic along some directions near the boundary. Comparing with the case $p>2$ in \cite{BRS_arXiv2026}, we need not to impose any smoothness for $\partial\Omega$.
\end{remark}

Using some localized barrier functions, we establish the following strong comparison principle with a more restricted parameter regime.

\begin{theorem}\label{2605211200}
Let $N\ge 1$,  $s\in (0, \frac12)$, $\frac{1}{1-s}<p< 2$, $U\subset \mathbb{R}^N_+$ be an  open set, and $V\in L_{loc}^{\infty}(U)$. If $u\in \widetilde{W}^{s, p}(U)\cap\widetilde{W}^{s, p}(\mathcal{T}(U))\cap L^{\infty}(\mathbb{R}^N)\cap C(\mathbb{R}^N)$ is a  solution of   \eqref{2412171452} and $u\ge u^*$ in $\mathbb{R}^N_+$, there holds then either $u\equiv u^*$ in $\mathbb{R}^N$ or $u-u^*>0$ in $U$.
\end{theorem}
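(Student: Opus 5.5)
We argue by contradiction. Set $w=u-u^*$, which is continuous, antisymmetric with respect to $\{x_N=0\}$ and satisfies $w\ge 0$ in $\mathbb{R}^N_+$. Assume $w\not\equiv 0$ in $\mathbb{R}^N$ and that $w(x_0)=0$ for some $x_0\in U$. Since $w$ is continuous and nonnegative in $\mathbb{R}^N_+$, there is a point $z\in\mathbb{R}^N_+$ and a ball $B_\rho(z)\Subset\mathbb{R}^N_+$ with $w\ge \eta>0$ on $B_\rho(z)$. Choose $r>0$ small so that $B_{2r}(x_0)\Subset U$, $B_{2r}(x_0)$ is far from $B_\rho(z)$ and from the hyperplane, and $V\in L^\infty(B_{2r}(x_0))$.

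The plan is to build a localized barrier $\varphi$ and prove, via the weak comparison machinery of Theorem \ref{2508070852} applied in the small set $B_r(x_0)$, that $u\ge u^*+\varphi$ there. This contradicts $w(x_0)=0$ as soon as $\varphi(x_0)>0$. Concretely, take
\[
\varphi=\varepsilon\,\psi+\varepsilon\,(\chi_{B_\rho(z)}-\chi_{B_\rho(z^*)}),
\]
where $\psi\in C_c^\infty(B_r(x_0))$, $0\le\psi\le1$, $\psi(x_0)=1$, with $\psi$ antisymmetrized by $\psi-\psi^*$. Then $\varphi$ is antisymmetric and $w\ge\varphi$ outside $B_r(x_0)$ in $\mathbb{R}^N_+$ provided $\varepsilon\le\eta$. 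We compare $u$ with $v:=u^*+\varphi$. Outside $B_r(x_0)$ in $\mathbb{R}^N_+$ we have $u\ge v$. It remains to show that $v$ is a subsolution in $B_r(x_0)$ relative to $u$, i.e.
\[
(-\Delta)_p^s u-(-\Delta)_p^s v\ge V(x)(u-v)\quad\text{weakly on }\{u<v\}\cap B_r(x_0).
\]
Given this, the small-measure weak comparison argument (testing with $(v-u)_+$ restricted to $B_r(x_0)$, together with the antisymmetric reflection trick used for Theorem \ref{2508070852}, and with $|B_r|$ small) yields $u\ge v$. In particular $w(x_0)\ge \varepsilon>0$, a contradiction.

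The core estimate is to write
\[
(-\Delta)_p^s v-(-\Delta)_p^s u^*=\big[(-\Delta)_p^s(u^*+\varepsilon\psi)-(-\Delta)_p^s u^*\big]+\text{(bump term)}.
\]
For $x\in B_r(x_0)$, the bump at $B_\rho(z)$ shifts $u^*(x)-u^*(y)$ by $-\varepsilon$ for $y\in B_\rho(z)$ and by $+\varepsilon$ for $y\in B_\rho(z^*)$. Because $|x-z|<|x-z^*|$, this produces a strictly negative contribution of order $-c\,\varepsilon\,\eta_0$ with $c>0$. Here one uses that $t\mapsto|t|^{p-2}t$ is increasing and that $u^*$ is bounded, so the increments are bounded below by $(p-1)(2\|u\|_\infty+1)^{p-2}\varepsilon$ since $p<2$. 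The term $V(u-v)$ is harmless once $\varepsilon$ is small, because on $\{u<v\}$ we have $|u-v|\le\varepsilon$ and $V$ is bounded.

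The main obstacle is the local perturbation $(-\Delta)_p^s(u^*+\varepsilon\psi)-(-\Delta)_p^s u^*$ with no regularity of $u$ beyond continuity. Since $p<2$, the map $t\mapsto|t|^{p-2}t$ is $(p-1)$-Hölder, so
\[
\big||a+b|^{p-2}(a+b)-|a|^{p-2}a\big|\le C|b|^{p-1}.
\]
With $b=\varepsilon(\psi(x)-\psi(y))$ and $|\psi(x)-\psi(y)|\le C\min\{|x-y|/r,1\}$, the perturbation is bounded, uniformly and independently of $u$, by
\[
C\varepsilon^{p-1}\int\frac{\min\{|x-y|/r,1\}^{p-1}}{|x-y|^{N+sp}}\,{\rm d}y .
\]
This integral converges exactly when $p-1>sp$, i.e.\ $p>\frac1{1-s}$, and this compatibility with $p<2$ forces $s<\frac12$. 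The result is a bound $C\varepsilon^{p-1}r^{-sp}$. This is larger than the bump gain $c\varepsilon$ as $\varepsilon\to0$, so the amplitudes must be decoupled. I would use amplitude $\delta$ for $\psi$ and $\varepsilon$ for the bump, fix $\varepsilon\le\eta$ first, and then choose $\delta$ with $C\delta^{p-1}r^{-sp}<\tfrac12 c\varepsilon$. The contradiction then reads $w(x_0)\ge\delta>0$.

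Remaining technical steps are routine but must be verified. First, the pointwise bounds must be rigorously justified in the weak formulation by testing with $(v-u)_+\chi_{B_r}$. Second, we must check that the antisymmetry lets the integrals over $\mathbb{R}^N_-$ be folded onto $\mathbb{R}^N_+$ with the correct signs, as in the proof of Theorem \ref{2508070852}.
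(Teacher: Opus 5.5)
There is a genuine gap in the bump step. You add the whole antisymmetric bump $\varepsilon(\chi_{B_\rho(z)}-\chi_{B_\rho(z^*)})$ to $u^*$. So for $x\in B_r(x_0)$ and $y\in B_\rho(z)$, after folding $B_\rho(z^*)$ onto $B_\rho(z)$, the bump contributes to $(-\Delta)_p^s v(x)-(-\Delta)_p^s u^*(x)$ the terms
\[
-\frac{\mathcal{P}(a)-\mathcal{P}(a-\varepsilon)}{|x-y|^{N+sp}}+\frac{\mathcal{P}(b+\varepsilon)-\mathcal{P}(b)}{|x-y^*|^{N+sp}},\qquad a=u^*(x)-u^*(y),\quad b=u^*(x)-u(y)=a-w(y).
\]
The two increments of $\mathcal{P}$ are taken at base points a distance $w(y)\ge\eta$ apart, which is not small. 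Since $\mathcal{P}'(t)=(p-1)|t|^{p-2}$ is non-constant and singular at $0$ for $p<2$, the two increments are not comparable. Your lower bound $(p-1)(2\|u\|_\infty+1)^{p-2}\varepsilon$ controls only the negative term. The positive term can be as large as $\varepsilon^{p-1}\gg\varepsilon$; for example, if $u(y)$ is close to $u^*(x)$ on the bump, then $b\approx 0$ while $a\approx w(y)\ge\eta$. The geometric factor $|x-y|^{N+sp}/|x-y^*|^{N+sp}\le 1-\beta$ cannot absorb a ratio of order $\varepsilon^{p-2}$. So the claimed gain $-c\varepsilon$ does not follow, and the sign of the bump term can actually go the wrong way. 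Your argument would only work for $p=2$, where both increments equal $\varepsilon$.

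The missing idea is to put the bump on $u$ in the upper half-space, with its mirror image on $u^*$. That is, compare $h=u-\delta\mathbbm{1}_K-\varepsilon g_k$ with its own reflection $h^*$, as the paper does. The bump increments in $(-\Delta)_p^s h(x)$ and $(-\Delta)_p^s h^*(x)$ then sit at base points $u(x)-u(y)$ and $u^*(x)-u(y)$, which differ only by $w(x)$. This is small on $B_{2r}(x_0)$ precisely because $u$ is continuous and $w(x_0)=0$. The $(p-1)$-H\"older continuity of $\mathcal{P}$ then makes the two increments nearly equal, and $\beta$ yields a strict gain $C\delta$. This is Lemma~\ref{2604131506}, whose smallness condition \eqref{2605111909} forces $r$ to be chosen after $\delta$; your choice of $r$ ignores this, and it is where continuity of $u$ is really used.

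The same reformulation fixes a second problem. Comparing $u$ with $v=u^*+\varphi$ is not of the form $(h,h^*)$. The folding argument behind Theorem~\ref{2508070852} (Lemma~\ref{2602021059}) uses $v(y^*)=u(y)$ to telescope the four quantities $u(x)-u(y)$, $u^*(x)-u^*(y)$, $u(x)-u^*(y)$, $u^*(x)-u(y)$. For your $v$ one has instead $v(y^*)=u(y)-\varphi(y)$, so that lemma does not apply as stated.

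Your local estimate for the $\psi$-perturbation is fine: the H\"older bound on $\mathcal{P}$ gives an integrable kernel exactly when $p-1>sp$. The decoupling of the two amplitudes is also fine. Together they match the paper's use of Lemma~\ref{2605121130} and its late choice of $\varepsilon_k$.
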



The proof introduces a localized barrier function and applies the weak comparison principle Theorem \ref{2508070852}. This is a fundamentally different approach compared to the measure-theoretic and Ishii-Lions methods used for the superquadratic case ($p > 2$). Crucially, our method does not require any regularity assumptions (such as H\"older continuity) on the solution nor on the domain.

Comparing to the case $p>2$, the method of \cite{BRS_arXiv2026} relies heavily on the $\frac{sp}{p-1}+\varepsilon$-H{\"o}lder regularity of solutions for $p>\max\big\{2, \frac{1}{1-s}\big\}$ and the $C^1$ regularity of solutions for $2<p<\frac{2}{1-s}$. As already mentioned, by the recent literature \cite{ BT_APDE2025, BDLM_JMPA2025, BDLMS_CVPDE2025, GL_MA2024}, the best regularity for fractional $p$-Laplacian equation with the parameters range $\frac{1}{1-s}<p<2$ only reaches $C^\alpha_{loc}$ with $\alpha = \frac{sp}{p-1} < 1$, which goes to $0$ as $s \to 0$. The value $\frac{1}{1-s}$ may be a borderline between H\"older regularity and Lipschitz regularity. This is the reason why we cannot derive strong comparison principle as in \cite[Theorem 4.4(ii)]{BRS_arXiv2026}. Moreover, our result extends \cite[Theorem 2]{CL_AM2018} under much weaker regularity condition. 

\begin{remark}
The restriction $s\in(0,\frac12)$ and $p > \frac{1}{1-s}$ is necessary for our approach, in order to construct a non-trivial, non-negative barrier function $g \in C^2_c(\mathbb{R}^N)$ such that 
$$\int_{\mathbb{R}^{N}}\frac{|g(x)-g(y)|^{p-1}}{|x-y|^{N+sp}}{\rm d}y$$
 is locally bounded. 
See Lemma \ref{2605121130} below, i.e. \cite[Lemma 3.5(i)]{J_ANS2018}. 
\end{remark}

As application of the above weak and strong comparison principles, we prove some symmetry results for positive weak solution to \eqref{main}.  Remark that the solutions only need to be continuous, and the domain only needs to be bounded and convex, without further boundary smoothness requirement.

\begin{theorem}\label{2602111215}
Let $N\ge 1$, $s\in (0, \frac12)$, $\frac{1}{1-s}<p< 2$, and let $\Omega\subset\mathbb{R}^N$ be a bounded open set, convex in $x_N$ direction and symmetric with respect to the hyperplane $\{x_N=0\}$. Assume that $f \in C^{0,1}_{loc}$ and $u\in W_0^{s, p}(\Omega)\cap C(\mathbb{R}^N)$ is a non-negative weak solution to \eqref{main}, then  $u$ is symmetric with respect to the hyperplane $\{x_N=0\}$. 
\end{theorem}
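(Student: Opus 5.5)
We run the moving plane method in the $x_N$ direction, using Theorem \ref{2508070852} to start the procedure and Theorem \ref{2605211200} to continue it. For $\lambda\in(0,a)$, with $a=\sup\{x_N: x\in\Omega\}$, set $T_\lambda=\{x_N=\lambda\}$, $x^\lambda=(x',2\lambda-x_N)$, $u_\lambda(x)=u(x^\lambda)$, $\Sigma_\lambda=\{x\in\Omega: x_N>\lambda\}$ and $w_\lambda=u_\lambda-u$. We aim to show $w_\lambda\ge 0$ in $\Sigma_\lambda$ for every $\lambda\in(0,a)$. Letting $\lambda\to0$ gives $u(x',-x_N)\ge u(x',x_N)$ for $x_N>0$. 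The same argument in the direction $-x_N$ gives the reverse inequality, so $u$ is symmetric.

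After translating $T_\lambda$ to $\{x_N=0\}$ and reflecting, the pair $(u_\lambda,u)$ fits the setting of \eqref{2412171452}. The roles are: $u_\lambda$ plays $u$, $u$ plays $u^*$, and the half-space is $\{x_N>\lambda\}$. Since $(-\Delta)_p^s$ commutes with reflections, $(-\Delta)_p^s u_\lambda=f(u_\lambda)$ holds weakly in the reflected domain $\{x^\lambda:x\in\Omega\}$, which contains $\Sigma_\lambda$ by convexity in $x_N$ and symmetry of $\Omega$. Hence in $\Sigma_\lambda$ we have
$$(-\Delta)_p^s u_\lambda-(-\Delta)_p^s u=V_\lambda(u_\lambda-u),\qquad V_\lambda=\frac{f(u_\lambda)-f(u)}{u_\lambda-u}\ \ (\text{and }0\text{ where } u_\lambda=u).$$
Here $|V_\lambda|\le L$, where $L$ is the Lipschitz constant of $f$ on $[0,\|u\|_\infty]$; this uses that $u\in C(\mathbb{R}^N)$ with compact support is bounded. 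On $\{x_N>\lambda\}\setminus\Sigma_\lambda$ we have $u=0\le u_\lambda$, so the exterior condition holds. One must check that test functions $(u-u_\lambda)_+\chi_{\Sigma_\lambda}$ are admissible and that $u_\lambda\in \widetilde W^{s,p}$ on the relevant sets; both follow from $u\in W^{s,p}_0(\Omega)$ and invariance under reflection.

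\emph{Start.} For $\lambda$ close to $a$, $|\Sigma_\lambda|$ is small, so \eqref{2603291622} holds with $\|V_+\|\le L$. Theorem \ref{2508070852} then gives $w_\lambda\ge0$ in $\Sigma_\lambda$.

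\emph{Continuation.} Let $\lambda_0=\inf\{\mu\in(0,a): w_\lambda\ge0 \text{ in }\Sigma_\lambda \ \forall \lambda\in[\mu,a)\}$ and suppose $\lambda_0>0$. By continuity $w_{\lambda_0}\ge0$ in $\Sigma_{\lambda_0}$. Apply Theorem \ref{2605211200} in the translated frame. Its hypotheses are $s<\tfrac12$, $\tfrac1{1-s}<p<2$, continuity, and $w_{\lambda_0}\ge0$ on the whole half-space; the last holds because it also holds outside $\Sigma_{\lambda_0}$. The theorem gives either $w_{\lambda_0}\equiv0$ or $w_{\lambda_0}>0$ in $\Sigma_{\lambda_0}$.

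The first alternative is impossible when $\lambda_0>0$. Take a point $x\in\Omega$ with $x_N>\lambda_0$ close to the top of $\Omega$ and $u(x)>0$; it exists unless $u\equiv0$, which is trivially symmetric. Its reflection $x^{\lambda_0}$ may lie outside $\Omega$, where $u=0$. More cleanly, $w_{\lambda_0}\equiv0$ would make $u$ symmetric about $T_{\lambda_0}$, which forces $u=0$ near the lower part of $\Omega$, and an antisymmetry argument then gives a contradiction. So $w_{\lambda_0}>0$ in $\Sigma_{\lambda_0}$.

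Now fix a compact $K\subset\Sigma_{\lambda_0}$ with $|\Sigma_{\lambda_0}\setminus K|$ small. By continuity and compactness, $w_\lambda>0$ on $K$ for $\lambda\in(\lambda_0-\varepsilon,\lambda_0]$. Moreover $|\Sigma_\lambda\setminus K|$ stays below the threshold of \eqref{2603291622}. Applying Theorem \ref{2508070852} on $U=\Sigma_\lambda\setminus K$, using $w_\lambda\ge0$ on $K$ and outside $\Sigma_\lambda$ as exterior data, gives $w_\lambda\ge0$ in $\Sigma_\lambda$. This contradicts the minimality of $\lambda_0$, so $\lambda_0=0$.

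\emph{Main obstacle.} I expect the hardest step to be excluding $w_{\lambda_0}\equiv0$ with $\lambda_0>0$, together with checking that the strong comparison is applicable with only continuity. The first thing to try is to evaluate the weak equation near the top of $\Omega$ under the symmetry about $T_{\lambda_0}$. The point is that $\Omega\cap\{x_N<2\lambda_0-a\}$ is nonempty, where $u_{\lambda_0}=0$ but $u$ need not vanish. If instead $u$ vanished there, then $(-\Delta)^s_p u<0$ pointwise on that open set, by nonnegativity and $u\not\equiv0$. This forces $f(0)<0$, which contradicts the equation holding in $\Omega$ near points where $u=0$ in the interior.
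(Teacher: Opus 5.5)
Your overall scheme is the one the paper invokes: weak comparison (Theorem \ref{2508070852}) on small caps to start, strong comparison (Theorem \ref{2605211200}) at the critical position $\lambda_0$, then weak comparison on $\Sigma_\lambda\setminus K$ as in Berestycki--Nirenberg. Your set-up is also correct: the reflection, the bound $|V_\lambda|\le L$, the inclusion $\Sigma_\lambda\subset\{x^\lambda: x\in\Omega\}$ by convexity, and the exterior sign condition. The genuine gap is the exclusion of the alternative $w_{\lambda_0}\equiv 0$ in $\mathbb{R}^N$ when $\lambda_0>0$. Without it the procedure may stop at some $\lambda_0>0$, and nothing you wrote rules this out.

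Your first attempt fails for two reasons. Nothing guarantees a point near the top of $\Omega$ with $u>0$. Moreover, for $x\in\Sigma_{\lambda_0}$ the reflected point $x^{\lambda_0}$ always lies in $\Omega$; that is exactly what convexity in $x_N$ gives. Your second attempt is circular. Under $w_{\lambda_0}\equiv0$ one has $u=u_{\lambda_0}$, so $u$ \emph{does} vanish on the whole half-space $H^-:=\{x_N<2\lambda_0-a\}$. The equation at points of $\Omega\cap H^-$ then yields $f(0)<0$, but this is no contradiction. The only hypothesis on $f$ is local Lipschitz continuity, and the equation near interior zeros of $u$ is precisely what produced $f(0)<0$. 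A nonnegative nontrivial $u$ always has negative $(-\Delta)^s_p u$ on the interior of its zero set, which is compatible with $f(0)<0$.

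The missing idea is that $(-\Delta)_p^s u$ cannot be \emph{constant} on an open piece of that zero set. First, $\Omega\cap H^-$ is a nonempty open set, because $\Omega$ reaches down to $x_N=-a<2\lambda_0-a$. Take a ball $B\Subset\Omega\cap H^-$ and test with $\varphi\in C_c^\infty(B)$. The $B\times B$ part of the weak form vanishes, since $u=0$ there, and the $B^c\times B^c$ part vanishes, since $\varphi=0$ there. Because ${\rm dist}(B,{\rm supp}\,u)>0$, the remaining kernel integral is finite and continuous in $x$, so one gets
$$
-2\int_{\mathbb{R}^N}\frac{u(y)^{p-1}}{|x-y|^{N+sp}}\,{\rm d}y=f(0)\qquad\text{for all } x\in B.
$$
Now ${\rm supp}(u)\subset\{y_N\ge 2\lambda_0-a\}$. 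Hence for $x$ and $x-te_N$ both in $B$ with $t>0$, we have $|x-te_N-y|>|x-y|$ for every $y$ in the support. Since $u\not\equiv0$ is continuous, it is positive on a set of positive measure, so the integral at $x-te_N$ is strictly smaller than at $x$. The left-hand side therefore cannot be constant on $B$, which is a contradiction. (Equivalently, the integral is strictly subharmonic on $H^-$.) This argument uses that $f$ is autonomous. With this step inserted, $w_{\lambda_0}>0$ in $\Sigma_{\lambda_0}$, and the rest of your argument goes through as in the paper's sketch.
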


The same argument also applies to non-autonomous nonlinearities $g(u,x)$, provided that $g$ is locally Lipschitz continuous in $u$, uniformly in $x$, and satisfies the appropriate symmetry and monotonicity assumptions. 

When $f$ has Sobolev subcritical or critical growth, any weak solution of \eqref{main} is bounded and continuous, which yields the following.
\begin{corollary}
\label{2603271652}
Let $N\ge 1$, $s\in (0, \frac12)$, $\frac{1}{1-s}<p< 2$, and $\Omega=B_R$, a Euclidean ball centered at the origin. Suppose that $f\in C_{loc}^{0, 1}(\R)$ satisfies
\begin{equation}\label{2608241033}
|f(u)|\le C\left(1+|u|^{\frac{Np}{N-sp}-1}\right).
\end{equation}
If $u\in W_0^{s,p}(B_R)$ is a nontrivial non-negative weak solution of \eqref{main}, then  $u$ is positive in $B_R$, radially symmetric and strictly decreasing with respect to the radius.
\end{corollary}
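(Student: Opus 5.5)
Corollary \ref{2603271652} follows from Theorem \ref{2602111215} combined with regularity and a positivity argument. The plan has four steps.

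\emph{Step 1: regularity.} Under the growth condition \eqref{2608241033}, we first show that any weak solution $u\in W_0^{s,p}(B_R)$ is bounded. The natural tool is a De Giorgi or Moser iteration for the fractional $p$-Laplacian with critical growth, as in Brasco--Lindgren--Parini or Iannizzotto--Mosconi--Squassina. The critical case requires the usual trick: split $|u|^{p^*_s-p}=a_1+a_2$ with $a_1\in L^{N/(sp)}$ of small norm and $a_2\in L^\infty$, then run the iteration. Once $u\in L^\infty$, the right-hand side $f(u)$ is bounded. Interior and global H\"older regularity for bounded right-hand sides on the smooth domain $B_R$ (see \cite{IMS_RMI2016}) then gives $u\in C(\mathbb{R}^N)$, with $u=0$ outside $B_R$. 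I expect this step to be the main technical point, but it can be cited from the literature.

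\emph{Step 2: symmetry.} For each unit direction $e$, the ball $B_R$ is convex in the direction $e$ and symmetric with respect to $\{x\cdot e=0\}$. After a rotation, which leaves $(-\Delta)_p^s$ invariant, Theorem \ref{2602111215} applies and gives that $u$ is symmetric with respect to every hyperplane through the origin. Hence $u$ is radial.

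\emph{Step 3: positivity.} Since $f$ is locally Lipschitz, $u\ge0$ and $u\in L^\infty$, there is $\Lambda\ge0$ with $f(u)+\Lambda u^{p-1}\ge f(0)+\Lambda u^{p-1}-Lu$, so a comparison with $u^{p-1}$ is needed. A cleaner route is to use the strong minimum principle for nonnegative continuous supersolutions of $(-\Delta)_p^s u\ge -c\,u^{p-1}$. That inequality requires $f(t)\ge -ct^{p-1}$ near $t=0$, which is not immediate when $p<2$ and $f(0)\ge0$ fails.

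Instead I will argue nonlocally. Suppose $u(x_0)=0$ for some $x_0\in B_R$. Test the equation against functions concentrated near $x_0$, or evaluate the equation in a weak viscosity sense at the minimum point. The nonlocal term there equals $-2\int |u(y)|^{p-1}|x_0-y|^{-N-sp}\,dy<0$ strictly, because $u\not\equiv0$. This forces $f(0)<0$. In that case we handle things differently: by the strict monotonicity proved in Step 4, $\{u=0\}\cap B_R$ would have to be an annulus near the boundary, and there $(-\Delta)_p^s u<0$ pointwise in the weak sense while $f(u)=f(0)$. If the sign of $f(0)$ is not excluded by the hypotheses, this step needs care. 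I would first try to deduce positivity from Theorem \ref{2605211200} applied to the hyperplane reflections, as Step 4 explains.

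\emph{Step 4: strict monotonicity and positivity via the moving plane.} The moving plane procedure underlying Theorem \ref{2602111215} yields $u_\lambda-u>0$ in $\Sigma_\lambda$ for every $\lambda\in(0,R)$. This is by the strong comparison Theorem \ref{2605211200}: the alternative $u\equiv u_\lambda$ is impossible because $u_\lambda>0=u$ somewhere outside the ball, provided $u\not\equiv0$ near the reflected cap. This gives strict radial decrease of $u$ along rays. Strict decrease together with $u\ge0$ and $u=0$ on $\partial B_R$ then gives $u>0$ in $B_R$, which settles positivity and avoids the issue raised in Step 3.
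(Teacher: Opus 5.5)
The skeleton of your proof is the paper's: regularity from the literature, then Theorem \ref{2602111215} in every direction. The paper's two-line proof stops there and reads strict radial decrease off the moving-plane procedure. Your Step 4 is the only place where positivity and strict monotonicity are actually argued, and as written it is circular.

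\textbf{The gap.} To exclude the alternative $u\equiv u_\lambda$ of Theorem \ref{2605211200}, you need $u_\lambda(x)>0=u(x)$ at some $x\notin B_R$. Equivalently, you need $u>0$ somewhere in the crescent $B_R\setminus\overline{B_R(2\lambda e_N)}$. Since $|z-2\lambda e_N|\le |z|+2\lambda$, this crescent lies in $\{R-2\lambda\le |z|<R\}$. Your proviso is therefore exactly positivity of $u$ near $\partial B_R$, which is what Step 3 could not establish. A radially nonincreasing $u$ vanishing on an annulus $\{r_0<|x|<R\}$ would violate it for every $\lambda<(R-r_0)/2$. You then derive positivity from the strict monotonicity that this exclusion was supposed to give, so the argument goes in a circle. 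Step 3 itself proves nothing: it evaluates the equation pointwise at a zero of a merely continuous weak solution and leaves the case $f(0)<0$ open. It should be discarded.

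\textbf{The repair.} Use Step 2 instead of positivity. Suppose $u\equiv u_\lambda$ on $\mathbb{R}^N$ for some $\lambda\in(0,R)$. Together with $u(x',x_N)=u(x',-x_N)$ this gives $u(x',t)=u(x',2\lambda-t)=u(x',t-2\lambda)$ for all $t$, so $u$ is $2\lambda$-periodic in $x_N$. Since $u$ vanishes outside $B_R$, this forces $u\equiv 0$, a contradiction.

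Now apply Theorem \ref{2605211200} after translating $\{x_N=\lambda\}$ to $\{x_N=0\}$, with $v=u_\lambda$, $U=\{x_N>\lambda\}\cap B_R$, and $V=(f(u_\lambda)-f(u))/(u_\lambda-u)$. This $V$ lies in $L^\infty(U)$ because $f$ is Lipschitz on $[0,\|u\|_{L^\infty}]$. The theorem upgrades the moving-plane inequality $u_\lambda\ge u$ to $u_\lambda>u$ in $U$ for every $\lambda\in(0,R)$.

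For $0\le a<b<R$, choosing $\lambda=(a+b)/2$ gives $u(ae_N)>u(be_N)$, which is strict radial decrease. Then $u\ge 0$ forces $u>0$ in $B_R$, as you say. With this fix, your Step 4 supplies the positivity and strict monotonicity that the paper leaves implicit.

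Both your argument and the paper's take the non-strict inequality $u_\lambda\ge u$ for all $\lambda\in(0,R)$ from the moving-plane proof of Theorem \ref{2602111215}, not from its statement. A minor point on Step 1: the paper cites \cite[Theorem 3.3]{CMS_JFA2018} for boundedness and continuity. The global H\"older theory of \cite{IMS_RMI2016} is, to my knowledge, stated only for $p\ge 2$, so it is not the right reference for $1<p<2$.
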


The paper is organized as follows. We will introduce some necessary notations, functional setting and preliminary results in Section \ref{S_P}. Section \ref{S2} is devoted to the proof of the weak comparison principle, Theorem \ref{2508070852}. Section \ref{2606091353}  describes the barrier construction and establishes the strong comparison principle, Theorem \ref{2605211200}. Finally, in Section \ref{2606091358}, we apply these results to prove the symmetry result, Theorem \ref{2602111215}.

\section{Preliminaries }\label{S_P}
\subsection{Notations and definitions}
Throughout this paper, we use the following notations.
\begin{itemize}
\item $\mathcal{P}(t)=|t|^{p-2}t$ for any $t\in \mathbb{R}$.
\item $\omega_{N}$ denotes the area of the unit sphere in $\mathbb R^{N+1}$, especially $\omega_{0}=2$.
\item ${\mathbbm 1}_A$ is the characteristic function of the set $A$.
\item $\mathbb{R}^N_+=\{x=(x', x_N)\in \mathbb{R}^{N-1}\times \mathbb{R}: x_N> 0\}$, $\mathbb{R}^N_-=\{x=(x', x_N)\in \mathbb{R}^{N-1}\times \mathbb{R}: x_N< 0\}$.
\item $x^*=(x', -x_N)$ for $x=(x', x_N)\in\mathbb{R}^{N-1}\times \mathbb{R}$, and $\mathcal{T}(U)=\{x^*: x\in U\}$ for any $U\subset \mathbb{R}^N$.
\item For $u: \mathbb{R}^N\to \mathbb{R}$, we denote $u^*(x)=u(x^*)$ and $J_u(x,y)=|u(x)-u(y)|^{p-2}(u(x)-u(y))$. 
\item $w_{\pm}(x)=\max\{0, \pm w(x)\}$.
\item $\|\cdot\|_{L^p(A)}$ stands for the usual norm of $L^{p}(A)$.
\item $B_R(x_0)$ means the ball of radius $R$ in $\mathbb R^N$, centered at $x_0$. We denote by $B_R$ if $x_0 = 0$.
\end{itemize}

For $s\in (0, 1)$ and $p>1$, we denote the Gagliardo seminorm by
\begin{equation}\label{2608311117}
[u]_{s,p}:=\left( \int_{\mathbb{R}^N}\int_{\mathbb{R}^N}{\frac{|u(x)-u(y)|^p}{|x-y|^{N+sp}}}{\rm d}x{\rm d}y\right)^{1/p}.
\end{equation}
Let
\[
W^{s,p}(\mathbb{R}^N):=\{u\in L^p(\mathbb{R}^N): [u]_{s,p}<\infty\}
\]
be endowed with the norm
\[
\|u\|_{W^{s,p}(\mathbb{R}^N)}:=\|u\|_{L^p(\mathbb{R}^N)}+[u]_{s,p}.
\]
For any open set $\Omega\subset \mathbb{R}^N$, we denote the subspace
\begin{equation}\label{2603311321}
W_0^{s,p}(\Omega):=\left\{u\in W^{s,p}(\mathbb{R}^N):  u=0 \; \text{~in}~\mathbb{R}^N \backslash \Omega\right\},
\end{equation}
which can be equivalently renormed with $\|u\|=[u]_{s,p}$ if $\Omega$ is  bounded, see \cite[Theorem 7.1]{DPV_BSM12}. It is well-known that $W_0^{s,p}(\Omega)$ is a uniformly convex Banach space for any $1 < p < \infty$. 

\begin{remark}
As we work with weak solutions, the equalities and inequalities hold in the weak sense, or almost everywhere. We often omit writing $a.e.$ for simplicity. For example, in \eqref{main} or in the definition \eqref{2603311321}, we mean $u=0$ in $\mathbb{R}^N \backslash \Omega$ almost everywhere.
\end{remark}

Recall that (see \eqref{2607282240})
\begin{equation*}
\widetilde{W}^{s, p}(\Omega):=\left\{u\in L^p_{loc}(\mathbb{R}^N): \int_{\Omega}\int_{\mathbb{R}^N}\frac{|u(x)-u(y)|^{p}}{|x-y|^{N+sp}} {\rm d}x{\rm d}y<\infty\right\}.
\end{equation*}
Then we define $(-\Delta)_p^s$ as the operator from $\widetilde{W}^{s, p}(\Omega)$ to the dual space $W_0^{s, p}(\Omega)'$ of $W_0^{s,p}(\Omega)$, i.e.
\begin{equation}\label{2608021653}
\big\langle (-\Delta)_p^s u, \varphi \big\rangle=\int_{\mathbb{R}^{N}}\int_{\mathbb{R}^{N}}{\frac{J_u(x,y)(\varphi(x)-\varphi(y))}{|x-y|^{N+sp}}}{\rm d}x{\rm d}y\;\;\quad \forall\, u\in \widetilde{W}^{s, p}(\Omega),\;\; \varphi\in W_0^{s,p}(\Omega),
\end{equation}
where $J_u(x,y)=|u(x)-u(y)|^{p-2}(u(x)-u(y))$. Now we can state the definition of weak solutions to \eqref{main} and \eqref{2412171452}.

\begin{definition}
We call $u\in W_0^{s,p}(\Omega)$ a weak solution to \eqref{main} if $f(u)\in  W_0^{s, p}(\Omega)'$ and
\begin{equation*}
\int_{\mathbb{R}^{N}}\int_{\mathbb{R}^{N}}{\frac{J_u(x,y)(\varphi(x)-\varphi(y))}{|x-y|^{N+sp}}}{\rm d}x{\rm d}y= \big\langle f(u), \varphi \big\rangle, \quad \forall\; \varphi \in W_0^{s,p}(\Omega).
\end{equation*} 
\end{definition}

\begin{definition}
We say that $u\in \widetilde{W}^{s, p}(U)\cap \widetilde{W}^{s, p}(\mathcal{T}(U))$ satisfies \eqref{2412171452}, if $u\ge u^*$ in $\mathbb{R}^N_+\backslash U$, $V(x)(u-u^*)\in W_0^{s,p}(U)'$, and there holds
\[
\big\langle (-\Delta)_p^s u, \varphi \big\rangle-\big\langle (-\Delta)_p^s u^*, \varphi \big\rangle \ge \int_{U}V(x)(u-u^*)\varphi {\rm d}x\quad\mbox{ for all  $\varphi\in W_0^{s,p}(U)$ with $\varphi\ge 0$}.
\]
\end{definition}

\subsection{Some preliminary lemmas}

 The first result shows the relationship between $\widetilde{W}^{s, p}(\Omega)$ and $W_0^{s, p}(\Omega)$. For more properties about $\widetilde{W}^{s, p}(\Omega)$, we refer to \cite[Section 2.2]{J_ANS2018}.

\begin{lemma}[\!\!\cite{J_ANS2018}, Lemma 2.6]\label{2605261121}
Let $\Omega$ be a bounded open set and $u\in \widetilde{W}^{s, p}(\Omega)$ with $u=0$ in $\mathbb{R}^N\backslash \Omega$. Then $u\in W_0^{s, p}(\Omega)$.
\end{lemma}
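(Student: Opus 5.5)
Since the space $W_0^{s,p}(\Omega)$ in \eqref{2603311321} is defined simply as the set of $u\in W^{s,p}(\mathbb{R}^N)$ vanishing a.e.\ outside $\Omega$, I would verify directly the two membership requirements, $u\in L^p(\mathbb{R}^N)$ and $[u]_{s,p}<\infty$. The condition $u=0$ in $\mathbb{R}^N\backslash\Omega$ is already part of the hypothesis.

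For the $L^p$ bound, I will use that $\Omega$ is bounded, so $\Omega\subset B_R$ for some $R>0$. Since $u\in L^p_{loc}(\mathbb{R}^N)$, we get $u\in L^p(B_R)$. Because $u$ vanishes a.e.\ outside $\Omega$, it follows that $\|u\|_{L^p(\mathbb{R}^N)}=\|u\|_{L^p(B_R)}<\infty$.

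For the seminorm, I would split $\mathbb{R}^N\times\mathbb{R}^N$ into three pieces:
\[
\Omega\times\mathbb{R}^N,\qquad (\mathbb{R}^N\backslash\Omega)\times\Omega,\qquad (\mathbb{R}^N\backslash\Omega)\times(\mathbb{R}^N\backslash\Omega).
\]
On the third piece $u(x)=u(y)=0$ a.e., so its contribution to \eqref{2608311117} vanishes. On the second piece I apply Tonelli's theorem to the nonnegative integrand, which is symmetric in $(x,y)$, and exchange the variables:
\[
\int_{\mathbb{R}^N\backslash\Omega}\int_{\Omega}\frac{|u(x)-u(y)|^p}{|x-y|^{N+sp}}{\rm d}y{\rm d}x=\int_{\Omega}\int_{\mathbb{R}^N\backslash\Omega}\frac{|u(x)-u(y)|^p}{|x-y|^{N+sp}}{\rm d}y{\rm d}x.
\]
The right-hand side is at most the defining integral in \eqref{2607282240}. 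The first piece is exactly that integral. Hence $[u]_{s,p}^p\le 2\int_{\Omega}\int_{\mathbb{R}^N}\frac{|u(x)-u(y)|^p}{|x-y|^{N+sp}}{\rm d}y{\rm d}x<\infty$, and therefore $u\in W^{s,p}(\mathbb{R}^N)$, i.e.\ $u\in W_0^{s,p}(\Omega)$.

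I do not expect a genuine obstacle here. The only points needing care are the use of Tonelli's theorem, which is justified because the integrand is nonnegative and measurable, and the fact that all identities hold only almost everywhere. Boundedness of $\Omega$ is used solely to upgrade local $p$-integrability to global $p$-integrability.
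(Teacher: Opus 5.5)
Your argument is correct and is the standard proof of this result; the paper gives no proof of its own and simply cites Jarohs. Because $W_0^{s,p}(\Omega)$ is defined in \eqref{2603311321} as the set of $u\in W^{s,p}(\mathbb{R}^N)$ vanishing outside $\Omega$, rather than as a closure of $C_c^\infty(\Omega)$, the claim reduces exactly to your two checks: $u\in L^p(\mathbb{R}^N)$ from local integrability plus bounded support, and $[u]_{s,p}^p\le 2\int_{\Omega}\int_{\mathbb{R}^N}\frac{|u(x)-u(y)|^p}{|x-y|^{N+sp}}\,{\rm d}y\,{\rm d}x<\infty$ by symmetry of the integrand.
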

 
For any open set $\Omega\subset\mathbb{R}^N$, we denote the first eigenvalue of $(-\Delta)_p^s$ in  $W_0^{s, p}(\Omega)$ by
\[
\lambda_{1, s, p}(\Omega)=\inf\left\{[u]_{s, p}^p: u\in W_0^{s,p}(\Omega), \; \|u\|_{L^p(\Omega)}=1\right\},
\]
which possesses the following lower bound estimate.
\begin{lemma}\label{2603291635}
Assume that $N\ge 1$, $s\in (0, 1)$, $p>1$ and $\Omega\subset \mathbb{R}^N$ is an open set. Then 
\begin{equation}\label{2607291936}
\lambda_{1, s, p}(\Omega)\ge \frac{2\omega_{N-1}^{1+\frac{sp}{N}}}{spN^{\frac{sp}{N}}}|\Omega|^{-\frac{sp}{N}}.
\end{equation}
\end{lemma}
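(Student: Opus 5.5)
The plan is to bound the Gagliardo seminorm from below by its ``boundary'' part alone, and then estimate the remaining kernel integral by a rearrangement comparison with a ball of the same measure as $\Omega$. If $|\Omega|=\infty$ the right-hand side of \eqref{2607291936} is $0$ and there is nothing to prove, so assume $|\Omega|<\infty$.

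\emph{Step 1: keep only the boundary interaction.} Fix $u\in W_0^{s,p}(\Omega)$ with $\|u\|_{L^p(\Omega)}=1$. Since $u=0$ a.e. outside $\Omega$, I drop the (non-negative) contribution of $\Omega\times\Omega$ in \eqref{2608311117}. The two symmetric pieces $\Omega\times(\mathbb{R}^N\backslash\Omega)$ and $(\mathbb{R}^N\backslash\Omega)\times\Omega$ contribute equally, which gives
\[
[u]_{s,p}^p\ \ge\ 2\int_\Omega |u(x)|^p\,\kappa_\Omega(x)\,{\rm d}x,\qquad \kappa_\Omega(x):=\int_{\mathbb{R}^N\backslash\Omega}\frac{{\rm d}y}{|x-y|^{N+sp}}.
\]
It therefore suffices to prove the pointwise bound $\kappa_\Omega(x)\ge \frac{\omega_{N-1}^{1+\frac{sp}{N}}}{spN^{\frac{sp}{N}}}|\Omega|^{-\frac{sp}{N}}$ for every $x\in\Omega$, and then take the infimum over $u$.

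\emph{Step 2: rearrangement comparison (the key step).} Fix $x$ and choose $r>0$ with $|B_r(x)|=|\Omega|$, that is, $\omega_{N-1}r^N/N=|\Omega|$; here $\omega_{N-1}$ is the area of the unit sphere in $\mathbb{R}^N$. Then $|\Omega\backslash B_r(x)|=|B_r(x)\backslash\Omega|$. The kernel $|x-y|^{-N-sp}$ is at most $r^{-N-sp}$ on $\Omega\backslash B_r(x)$ and at least $r^{-N-sp}$ on $B_r(x)\backslash\Omega$. Hence
\[
\kappa_\Omega(x)-\int_{\mathbb{R}^N\backslash B_r(x)}\frac{{\rm d}y}{|x-y|^{N+sp}}
=\int_{B_r(x)\backslash\Omega}\frac{{\rm d}y}{|x-y|^{N+sp}}-\int_{\Omega\backslash B_r(x)}\frac{{\rm d}y}{|x-y|^{N+sp}}\ \ge 0 .
\]
This is the only non-routine point: it uses that the kernel is radially decreasing and that the two sets being exchanged have equal measure. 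Both integrals are finite because $|\Omega|<\infty$ and the sets involved stay away from $x$ or are bounded.

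\emph{Step 3: compute.} In polar coordinates,
\[
\int_{|z|>r}|z|^{-N-sp}\,{\rm d}z=\omega_{N-1}\int_r^\infty t^{-1-sp}\,{\rm d}t=\frac{\omega_{N-1}}{sp}\,r^{-sp}.
\]
Substituting $r^{-sp}=\big(\omega_{N-1}/(N|\Omega|)\big)^{sp/N}$ gives
\[
\kappa_\Omega(x)\ \ge\ \frac{\omega_{N-1}^{1+\frac{sp}{N}}}{spN^{\frac{sp}{N}}}\,|\Omega|^{-\frac{sp}{N}} .
\]
Multiplying by the factor $2$ from Step 1 yields exactly \eqref{2607291936}. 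The case $N=1$ is covered by the convention $\omega_0=2$.
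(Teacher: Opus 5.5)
Your proposal is correct and follows the paper's proof. You drop the $\Omega\times\Omega$ term, bound $\int_{\mathbb{R}^N\backslash\Omega}|x-y|^{-N-sp}\,{\rm d}y$ from below by the same integral over the complement of the ball of measure $|\Omega|$ centered at $x$, and compute in polar coordinates; your Step 2 also supplies the equal-measure exchange argument that the paper leaves implicit.
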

\begin{proof}
Let $u\in W_0^{s,p}(\Omega)$ and $\|u\|_{L^p(\Omega)}=1$. The estimate is trivial if $|\Omega|=\infty$, let then $|\Omega|<\infty$.
\begin{align*}
[u]_{s, p}^p & = \int_{\Omega\times \Omega}\frac{|u(x)-u(y)|^p}{|x-y|^{N+sp}} {\rm d}x{\rm d}y+ 2\int_{\Omega\times \Omega^c}\frac{|u(x)|^p}{|x-y|^{N+sp}} {\rm d}x{\rm d}y\\
&\ge 2\|u\|_{L^p(\Omega)}^p\inf_{x\in \Omega}\int_{\Omega^c}\frac{1}{|x-y|^{N+sp}} {\rm d}y,
\end{align*}
Hence we have
\begin{align*}
[u]_{s, p}^p \ge 2\int_{B_{r(\Omega)}^c}\frac{1}{|y|^{N+sp}} {\rm d}y =2\frac{\omega_{N-1}}{sp}r(\Omega)^{-sp} =\frac{2\omega_{N-1}^{1+\frac{sp}{N}}}{spN^{\frac{sp}{N}}}|\Omega|^{-\frac{sp}{N}},
\end{align*}
where $B_{r(\Omega)}$ satisfies $|B_{r(\Omega)}|=|\Omega|$.
\end{proof}

The following are some elementary inequalities, where we sometimes omit  the proof.
\begin{lemma}\label{Lo}
For any $1<p\le 2$ and $a > b$, there holds
\[
\int_b^a|t|^{p-2} {\rm d}t \ge (a-b)\min \big\{|a|^{p-2}, |b|^{p-2}\big\}.
\]
\end{lemma}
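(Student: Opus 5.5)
The statement to prove is the last one displayed, Lemma \ref{Lo}: for $1<p\le 2$ and $a>b$,
\[
\int_b^a|t|^{p-2}\,{\rm d}t \ge (a-b)\min\big\{|a|^{p-2},|b|^{p-2}\big\}.
\]
The plan is a pointwise lower bound on the integrand followed by integration over $[b,a]$. Since $p-2\le 0$, the map $t\mapsto |t|^{p-2}$ is nonincreasing in $|t|$. For every $t\in[b,a]$ we have $|t|\le \max\{|a|,|b|\}$, because $|t|$ is a convex function and so attains its maximum on an interval at an endpoint.

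\emph{Case $p<2$.} Combining the two facts gives
\[
|t|^{p-2}\ge \max\{|a|,|b|\}^{p-2}=\min\{|a|^{p-2},|b|^{p-2}\}\qquad\text{for all } t\in[b,a].
\]
This holds with the convention $0^{p-2}=+\infty$, and the singularity at $t=0$ is integrable because $p-2>-1$.

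\emph{Case $p=2$.} The integrand is identically $1$, and both sides of the inequality equal $a-b$.

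Integrating the pointwise bound over $[b,a]$ yields the claim at once. I do not expect any real obstacle. The only point needing care is the degenerate case where $a$ or $b$ is $0$, which the $+\infty$ convention handles: the bound is then trivially true, or is read through the finite integral.
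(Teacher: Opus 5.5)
Your proof is correct. Since $\max\{|a|,|b|\}>0$ (because $a\neq b$), the pointwise bound $|t|^{p-2}\ge \max\{|a|,|b|\}^{p-2}=\min\{|a|^{p-2},|b|^{p-2}\}$ on $[b,a]$, integrated, gives the claim; only the identification with the minimum needs the convention $0^{p-2}=+\infty$. The paper states this lemma as elementary and omits the proof, and your argument is the natural one implicit there: it uses the same monotonicity of $|t|^{p-2}$ in $|t|$ that drives the paper's proof of Lemma \ref{2605111444}.
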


\begin{lemma}\label{2605111444}
 For any $1<p\le 2$, there hold  
\[
 \left| |t+\delta|^{p-2}(t+\delta)-|t|^{p-2}t\right|\le 2^{2-p}|\delta|^{p-1}\quad \mbox{for all } t, \delta\in \mathbb{R};
\]
and let $M>0$, we have
\[
|t+\delta|^{p-2}(t+\delta)-|t|^{p-2}t\ge (p-1)(M+\delta)^{p-2}\delta \quad \mbox{for all } |t|\le M \mbox{ and } \delta\ge 0.
\]
\end{lemma}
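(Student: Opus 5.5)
We need to prove Lemma \ref{2605111444}, the two elementary inequalities, for $1<p\le 2$.

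\emph{First inequality.} The map $\mathcal{P}(t)=|t|^{p-2}t$ is odd and nondecreasing, so it suffices to treat $\delta>0$; the case $\delta<0$ follows by replacing $(t,\delta)$ with $(-t,-\delta)$. We must show $\mathcal{P}(t+\delta)-\mathcal{P}(t)\le 2^{2-p}\delta^{p-1}$. Set $a=t+\delta>b=t$. We split into cases according to the signs of $a$ and $b$.
\begin{itemize}
\item If $b\ge0$, then $a^{p-1}-b^{p-1}\le (a-b)^{p-1}$, since $x\mapsto x^{p-1}$ is concave and vanishes at $0$, hence subadditive.
\item If $a\le 0$, the same bound follows by symmetry.
\item If $b<0<a$, the difference equals $a^{p-1}+|b|^{p-1}$ with $a+|b|=\delta$. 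By concavity of $x\mapsto x^{p-1}$,
\[
a^{p-1}+|b|^{p-1}\le 2\left(\tfrac{\delta}{2}\right)^{p-1}=2^{2-p}\delta^{p-1}.
\]
\end{itemize}
In the first two cases the constant $1\le 2^{2-p}$ suffices. This proves the first inequality.

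\emph{Second inequality.} Let $|t|\le M$ and $\delta\ge0$; the case $\delta=0$ is trivial. For $\delta>0$ write
\[
\mathcal{P}(t+\delta)-\mathcal{P}(t)=(p-1)\int_t^{t+\delta}|\tau|^{p-2}\,{\rm d}\tau .
\]
The integral converges, since $p-2>-1$. On $[t,t+\delta]$ we have $|\tau|\le \max\{|t|,|t+\delta|\}\le M+\delta$. Because $p-2\le0$, this gives $|\tau|^{p-2}\ge (M+\delta)^{p-2}$. Integrating yields the lower bound $(p-1)(M+\delta)^{p-2}\delta$, as required. Alternatively, one can invoke Lemma \ref{Lo} with $a=t+\delta$, $b=t$, noting that $\min\{|a|^{p-2},|b|^{p-2}\}\ge (M+\delta)^{p-2}$.

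No real obstacle is expected. The only point needing care is the sign-changing case $b<0<a$ in the first inequality, and this is exactly where the constant $2^{2-p}$ arises.
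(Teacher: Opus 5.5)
Your proof is correct. For the second inequality you follow essentially the paper's argument: the integral representation $\mathcal{P}(t+\delta)-\mathcal{P}(t)=(p-1)\int_t^{t+\delta}|\tau|^{p-2}\,{\rm d}\tau$ together with a pointwise lower bound on the integrand. The paper first passes through $\int_M^{M+\delta}|\tau|^{p-2}\,{\rm d}\tau$; you skip that step by noting directly that $|\tau|\le M+\delta$ on $[t,t+\delta]$.

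For the first inequality your route differs. The paper uses the same integral representation and asserts in one line that $\bigl|\int_t^{t+\delta}|\tau|^{p-2}\,{\rm d}\tau\bigr|\le 2\int_0^{|\delta|/2}|\tau|^{p-2}\,{\rm d}\tau$. In other words, among intervals of length $|\delta|$, the one centred at the origin maximizes the integral of the symmetric decreasing weight $|\tau|^{p-2}$. You instead reduce to $\delta>0$ by oddness and split according to the signs of $t$ and $t+\delta$. When both have the same sign you use subadditivity of $x\mapsto x^{p-1}$, and when the interval straddles $0$ you use Jensen's inequality.

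The paper's version is shorter and handles both inequalities with one tool, but it leaves this rearrangement step unjustified. Your case analysis is in effect a proof of that step. It also shows that the constant $2^{2-p}$ is needed only in the sign-changing case, where it is attained at $t=-\delta/2$, while the constant $1$ suffices when $t$ and $t+\delta$ have the same sign.
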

\begin{proof}
Given any $t, \delta\in \mathbb{R}$,
\[
\begin{aligned}
\Big| |t+\delta|^{p-2}(t+\delta)-|t|^{p-2}t\Big|&=(p-1)\Big|\int_{t}^{t+\delta} |\tau|^{p-2} d\tau\Big| \le 2(p-1)\int_{0}^{\frac{|\delta|}2} |\tau|^{p-2} d\tau=2^{2-p}|\delta|^{p-1}.
\end{aligned}
\]
If now $|t|\le M$ and $\delta\ge 0$,
\begin{align*}
|t+\delta|^{p-2}(t+\delta)-|t|^{p-2}t & =(p-1)\int_{t}^{t+\delta} |\tau|^{p-2} d\tau\\
& \ge (p-1)\int_{M}^{M+\delta} |\tau|^{p-2} d\tau\ge (p-1)(M+\delta)^{p-2}\delta.
\end{align*}
The proof is done.
\end{proof}

\begin{lemma}\label{2604171624}
Let $N\ge 1$, $s\in (0, 1)$ and $p>1$. Assume that $\mathcal{U}$ is an open set such that $\overline{\mathcal{U}} \Subset \mathbb{R}^N_+$, and $K\Subset \mathbb{R}_+^N\backslash \overline{\mathcal{U}}$. Then
\[
\int_{K}\frac{1}{|x-y|^{N+sp}} {\rm d}y\le {\rm dist}(\mathcal{U}, K)^{-N-sp}|K|\quad \mbox{for all }x\in \mathcal{U},
\]
and 
\[
\underset{x\in \mathcal{U}, y\in K}{\sup}\frac{|x-y|}{|x-y^*|}<1,
\]
where ${\rm dist}(\mathcal{U}, K)=\inf{\{|x-y|: x\in \mathcal{U}, y\in K\}}$.
\end{lemma}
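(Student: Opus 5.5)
The plan is to prove both estimates directly, using only the compactness of $\overline{\mathcal{U}}$ and $K$ together with their positive distance from each other and from the hyperplane $\{x_N=0\}$.

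\emph{Preliminary observations.} Since $\overline{\mathcal{U}}\Subset\mathbb{R}^N_+$, the set $\overline{\mathcal{U}}$ is compact and contained in the open half-space. Likewise $K$ is relatively compact in $\mathbb{R}^N_+\backslash\overline{\mathcal{U}}$; replacing $K$ by its closure if necessary, we may assume $K$ is compact. We then record three facts.
\begin{enumerate}
\item Because $\overline{\mathcal{U}}$ and $\overline K$ are disjoint compact sets, ${\rm dist}(\mathcal{U},K)=d>0$.
\item There are constants $a,b>0$ with $x_N\ge a$ for all $x\in\overline{\mathcal{U}}$ and $y_N\ge b$ for all $y\in \overline K$, since continuous positive functions attain a positive minimum on compact subsets of $\mathbb{R}^N_+$.
\item There is $D<\infty$ with $|x-y^*|\le D$ for all $x\in\mathcal{U}$, $y\in K$, because both sets are bounded.
\end{enumerate}

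\emph{First estimate.} For $x\in\mathcal{U}$ and $y\in K$ we have $|x-y|\ge d$. Hence the integrand satisfies $|x-y|^{-N-sp}\le d^{-N-sp}$. Integrating over $K$ gives the bound ${\rm dist}(\mathcal{U},K)^{-N-sp}|K|$ immediately.

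\emph{Second estimate.} We use the elementary identity
\[
|x-y^*|^2-|x-y|^2=(x_N+y_N)^2-(x_N-y_N)^2=4x_Ny_N,
\]
in which the $x'$-components cancel. It follows that
\[
\frac{|x-y|^2}{|x-y^*|^2}=1-\frac{4x_Ny_N}{|x-y^*|^2}\le 1-\frac{4ab}{D^2}.
\]
This right-hand side is strictly less than $1$. It is also nonnegative, since $4x_Ny_N\le |x-y^*|^2$ always holds. Taking the supremum over $x\in\mathcal{U}$, $y\in K$ yields
\[
\sup_{x\in\mathcal{U},\,y\in K}\frac{|x-y|}{|x-y^*|}\le \sqrt{1-4ab/D^2}<1.
\]

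There is no real obstacle in this argument. The only point needing care is to ensure that $a,b>0$ and $D<\infty$, and this is exactly what the compact-containment hypotheses provide.
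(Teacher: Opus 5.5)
Your proof is correct, and it is exactly the elementary verification the paper intends: the lemma is stated there without proof. The two ingredients are the pointwise bound $|x-y|\ge {\rm dist}(\mathcal{U},K)>0$ for the first estimate, and, for the second, the identity $|x-y^*|^2-|x-y|^2=4x_Ny_N$ combined with lower bounds on $x_N,y_N$ and an upper bound on $|x-y^*|$.

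One small caution: do not literally replace $K$ by $\overline{K}$, since $|\overline{K}|$ may exceed $|K|$ and the first inequality is stated with $|K|$. You only need $\overline{K}$ and $\overline{\mathcal{U}}$ to obtain $d,a,b>0$ and $D<\infty$; the integral bound should be taken over $K$ itself, which is what your argument actually does.
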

 
 The next result is a variant of \cite[Lemma 2.8]{IMS_RMI2016}, see also \cite[Lemma 3.3]{J_ANS2018}.
\begin{lemma}\label{2605211341}
Let $N\ge 1$, $s\in (0, 1)$ and $p>1$. Suppose that $\Omega \subset \mathbb{R}^N$ is a bounded open set, and $u\in \widetilde{W}^{s,p}(\Omega)$ is a weak solution of
\[
(-\Delta)_p^s u= g\quad \mbox{in } \Omega,
\]
for some $g\in L^1_{loc}(\Omega)\cap W_0^{s, p}(\Omega)'$. Let $v\in L^1_{loc}(\mathbb{R}^N)$  satisfy
\[
{\rm dist}({\rm supp}(v), \Omega)>0, \quad \int_{\Omega^c}\frac{|v(x)|^{p-1}}{(1+|x|)^{N+sp}} {\rm d}x<\infty.
\]
Define
\[
h(x)=2\int_{{\rm supp} (v)}\frac{J_{u+v}(x, y)-J_u(x, y)}{|x-y|^{N+sp}} {\rm d}y, \quad \mbox{for a.e. }x\in \Omega.
\]
Then $u+v\in \widetilde{W}^{s,p}(\Omega)$ and it solves weakly $(-\Delta)_p^s (u+v)= g+h$ in $\Omega$.
\end{lemma}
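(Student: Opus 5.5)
The plan is to compute the difference $\langle(-\Delta)_p^s(u+v),\varphi\rangle-\langle(-\Delta)_p^s u,\varphi\rangle$ directly for an arbitrary test function $\varphi\in W_0^{s,p}(\Omega)$. The only structural facts used are that $v$ vanishes outside $S:={\rm supp}(v)$, that $\varphi$ vanishes outside $\Omega$, and that $d:={\rm dist}(S,\Omega)>0$.

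\emph{Step 1: membership $u+v\in\widetilde W^{s,p}(\Omega)$.} Since $v=0$ on $\Omega$, for $x\in\Omega$ we have $(u+v)(x)-(u+v)(y)=u(x)-u(y)-v(y)$. Hence
\[
\int_\Omega\int_{\mathbb R^N}\frac{|(u+v)(x)-(u+v)(y)|^p}{|x-y|^{N+sp}}\,{\rm d}y{\rm d}x\le 2^{p-1}\int_\Omega\int_{\mathbb R^N}\frac{|u(x)-u(y)|^p}{|x-y|^{N+sp}}+2^{p-1}|\Omega|\sup_{x\in\Omega}\int_S\frac{|v(y)|^p}{|x-y|^{N+sp}}\,{\rm d}y .
\]
For $x\in\Omega$ (bounded) and $y\in S$ we have $|x-y|\ge d$, and therefore $|x-y|\ge c(1+|y|)$ with $c=c(d,\Omega)>0$. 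So the last term is controlled by a weighted tail integral of $v$.

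Here I have to be careful. The hypothesis gives only the $(p-1)$-weighted tail, so one needs $v\in L^p_{loc}$ with $\int_S|v|^p(1+|y|)^{-N-sp}<\infty$. This holds, for instance, when $v$ is bounded, which is the situation in all later applications. I would state this as the interpretation of the assumption, following \cite[Lemma 2.8]{IMS_RMI2016}. With it, $u+v\in L^p_{loc}$ and the membership follows.

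\emph{Step 2: splitting the bilinear form.} The integrand $\bigl(J_{u+v}(x,y)-J_u(x,y)\bigr)(\varphi(x)-\varphi(y))$ vanishes in the following regions:
\begin{itemize}
\item on $S^c\times S^c$, because there $v(x)=v(y)=0$;
\item on $S\times S$, because there $\varphi(x)=\varphi(y)=0$;
\item on $S\times\Omega^c$ and on $\Omega^c\times S$, because $\varphi=0$ on both factors.
\end{itemize}
So only $\Omega\times S$ and $S\times\Omega$ contribute. The antisymmetry $J_w(x,y)=-J_w(y,x)$ makes these two contributions equal, which gives
\[
\langle(-\Delta)_p^s(u+v)-(-\Delta)_p^s u,\varphi\rangle=\int_\Omega h(x)\varphi(x)\,{\rm d}x .
\]
Using Fubini here requires absolute integrability, which is the content of Step 3. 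Adding the equation for $u$ then gives $(-\Delta)_p^s(u+v)=g+h$ weakly.

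\emph{Step 3: bounds on $h$ (the main obstacle).} I must show that $h$ is well defined and that $h\in L^1_{loc}(\Omega)\cap W_0^{s,p}(\Omega)'$.
\begin{itemize}
\item For $1<p\le2$, Lemma \ref{2605111444} gives $|J_{u+v}(x,y)-J_u(x,y)|\le 2^{2-p}|v(y)|^{p-1}$. Combined with $|x-y|\ge c(1+|y|)$, this yields
\[
\sup_{x\in\Omega}|h(x)|\le C\int_{\Omega^c}\frac{|v(y)|^{p-1}}{(1+|y|)^{N+sp}}\,{\rm d}y<\infty .
\]
Thus $h\in L^\infty(\Omega)\subset W_0^{s,p}(\Omega)'$ since $\Omega$ is bounded, and $h\varphi$ is integrable, which justifies Fubini in Step 2.
\item For $p>2$, I would instead use $|J_{u+v}-J_u|\le C\bigl(|u(x)|^{p-2}+|u(y)|^{p-2}+|v(y)|^{p-2}\bigr)|v(y)|$. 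The resulting bound is $|h(x)|\le C(1+|u(x)|^{p-1})$. This uses the finiteness of the tail of $u$, which follows from $u\in\widetilde W^{s,p}(\Omega)$ by integrating over a ball in $\Omega$, together with Young's inequality. Hence $h\in L^{p'}(\Omega)$, which again embeds into $W_0^{s,p}(\Omega)'$ by H\"older's inequality.
\end{itemize}
In both cases $h\in L^1_{loc}(\Omega)\cap W_0^{s,p}(\Omega)'$, which completes the argument.
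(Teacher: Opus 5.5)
Your proof is correct. It is essentially the direct computation behind \cite[Lemma 2.8]{IMS_RMI2016}, which the paper cites instead of giving its own proof: restrict the double integral to $(\Omega\times S)\cup(S\times\Omega)$, merge the two pieces by antisymmetry of $J$, and bound $J_{u+v}-J_u$ by Lemma \ref{2605111444} when $p\le 2$, or by the mean value bound when $p>2$.

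Your caveat in Step 1 is justified. Under the paper's definition \eqref{2607282240}, the membership $u+v\in\widetilde W^{s,p}(\Omega)$ is equivalent to $\int_{\mathbb{R}^N}|v|^p(1+|y|)^{-N-sp}\,{\rm d}y<\infty$. The stated $(p-1)$-tail hypothesis does not imply this, so the extra assumption is genuinely needed for the membership claim. It costs nothing here, because the paper only applies the lemma to bounded, compactly supported $v$ (multiples of ${\mathbbm 1}_K$, ${\mathbbm 1}_{\mathcal{T}(K)}$ and $g_k^*$).
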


\section{Weak comparison principle}\label{S2}
In this section, we are devoted to the proof of Theorem \ref{2508070852}. We assume that $U\subset \mathbb{R}^N_+$ is a bounded open set. For $u$ defined over $U\cup\mathcal{T}(U)$, we denote $w=u-u^*$, which becomes an anti-symmetric function with respect to $x_N$.
\begin{lemma}\label{2602101903}
Let $N\ge 1$, $s\in (0, 1)$ and $p>1$. Assume that $U\subset \mathbb{R}^N_+$ is a bounded open set, $u\in \widetilde{W}^{s,p}(U)\cap \widetilde{W}^{s,p}(\mathcal{T}(U))$ and $w=u-u^*\ge 0$ a.e. in $\mathbb{R}^N_+\backslash U$. Then we have $w_-{\mathbbm 1}_{U}\in W_0^{s, p}(U)$.
\end{lemma}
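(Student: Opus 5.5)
By Lemma \ref{2605261121}, it suffices to show that $v:=w_-{\mathbbm 1}_{U}$ belongs to $\widetilde{W}^{s,p}(U)$. We may regard $v$ as a function on all of $\mathbb{R}^N$. It vanishes outside $U$, and $U$ is bounded. Indeed, since $w\ge 0$ a.e. in $\mathbb{R}^N_+\backslash U$, we have $w_-{\mathbbm 1}_U = w_-{\mathbbm 1}_{\mathbb{R}^N_+}$ a.e.

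We first check that $v\in L^p_{loc}(\mathbb{R}^N)$. On $U$ we have $|v|\le |u|+|u^*|$. Moreover $u\in L^p_{loc}$, and $u^*\in L^p_{loc}$ because reflection preserves Lebesgue measure. Hence $v\in L^p(U)$, and $v=0$ elsewhere.

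The main step is the bound
\[
\int_U\int_{\mathbb{R}^N}\frac{|v(x)-v(y)|^p}{|x-y|^{N+sp}}\,{\rm d}y\,{\rm d}x<\infty .
\]
We split the $y$-integral into $y\in\mathbb{R}^N_+$ and $y\in\mathbb{R}^N_-$.

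For $x\in U$ and $y\in\mathbb{R}^N_+$, both $v(x)=w_-(x)$ and $v(y)=w_-(y)$ hold. Since $t\mapsto t_-$ is $1$-Lipschitz, we get
\[
|v(x)-v(y)|\le |w(x)-w(y)|\le |u(x)-u(y)|+|u(x^*)-u(y^*)|.
\]
The first term is integrable over $U\times\mathbb{R}^N$ because $u\in\widetilde W^{s,p}(U)$. For the second term we use the change of variables $(x,y)\mapsto(x^*,y^*)$, which preserves $|x-y|$. This turns it into $\int_{\mathcal{T}(U)}\int_{\mathbb{R}^N_-}|u(\xi)-u(\eta)|^p|\xi-\eta|^{-N-sp}$, which is finite since $u\in\widetilde W^{s,p}(\mathcal{T}(U))$.

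For $x\in U$ and $y\in\mathbb{R}^N_-$ we have $v(y)=0$, so the integrand is $|w_-(x)|^p|x-y|^{-N-sp}$. This is the delicate term, because its kernel is singular at the boundary $x_N\to0$. The remedy is to compare with the reflected point. For $x,y\in\mathbb{R}^N_+$ one has $|x-y^*|\ge|x-y|$, and $w(y^*)=-w(y)$.

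Consider $x\in U$ with $w(x)<0$, and write $y=z^*$ with $z\in\mathbb{R}^N_+$. Then
\[
w(x)-w(y)=w(x)+w(z).
\]
If $w(z)\le 0$, then $|w(x)-w(y)|\ge |w(x)|=w_-(x)$. If $w(z)>0$, then $|w(x)-w(z)|=|w(x)|+w(z)\ge w_-(x)$. In either case,
\[
w_-(x)^p\le |w(x)-w(z^*)|^p+|w(x)-w(z)|^p .
\]
Using $|x-z^*|\ge|x-z|$ together with this inequality, we obtain
\[
\int_{\mathbb{R}^N_-}\frac{w_-(x)^p}{|x-y|^{N+sp}}\,{\rm d}y\le\int_{\mathbb{R}^N}\frac{|w(x)-w(y)|^p}{|x-y|^{N+sp}}\,{\rm d}y .
\]
The right-hand side is integrated over $x\in U$ and is bounded by $2^{p-1}$ times the sum of the two finite seminorm quantities already controlled in the previous paragraph (with $y$ now ranging over all of $\mathbb{R}^N$, which the same reflection argument handles).

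This yields $v\in\widetilde W^{s,p}(U)$, and Lemma \ref{2605261121} then gives the conclusion. I expect the boundary-singular term to be the main obstacle. The pointwise case analysis on the sign of $w(z)$ in the previous paragraph is the key step that resolves it.
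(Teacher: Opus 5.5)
Your proof is correct and follows essentially the same route as the paper. You reduce to $w_-{\mathbbm 1}_U\in\widetilde W^{s,p}(U)$ via Lemma \ref{2605261121}, and you handle pairs $(x,y)$ with $y\in\mathbb{R}^N_-$, $w(y)<0$ by passing to $y^*$ using antisymmetry and $|x-y|\ge|x-y^*|$. The differences are only bookkeeping: the paper splits along the sign sets $D_1,D_2$ and bounds everything by $2\int_{U\times\mathbb{R}^N}|w_-(x)-w_-(y)|^p|x-y|^{-N-sp}$, whereas you split into half-spaces and bound directly by the corresponding integral of $w$, and you also make explicit the $L^p_{loc}$ check and why $u^*\in\widetilde W^{s,p}(U)$, which the paper leaves implicit.
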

\begin{proof}
By Lemma \ref{2605261121}, it suffices to verify $w_-{\mathbbm 1}_{U}\in \widetilde{W}^{s,p}(U)$. 
Set 
\[
D_1:=\{x\in U: w_-(x)>0\},\quad D_2:=\{y\in \mathbb{R}^N_-: w_-(y)>0\}.
\]
Suppose that $D_1\neq\varnothing$ and $D_2\neq\varnothing$. Since $w$ is anti-symmetric, for any $(x, y)\in D_1\times D_2$, there holds
\[
\begin{aligned}
\frac{|{w_-(x){\mathbbm 1}_{U}}(x)-{w_-(y){\mathbbm 1}_{U}}(y)|^p}{|x-y|^{N+sp}}=&\frac{|{w_-(x)}|^p}{|x-y|^{N+sp}}= \frac{|w_-(x)-w_-(y^*)|^p}{|x-y|^{N+sp}}\le \frac{|w_-(x)-w_-(y^*)|^p}{|x-y^*|^{N+sp}}.
\end{aligned}
\]
In the last inequality above, we used $|x-y|\ge |x-y^*|$. Hence
\begin{equation}\label{2412312226}
\begin{aligned}
\int_{D_1\times D_2}\frac{|{w_-(x){\mathbbm 1}_{U}}(x)-{w_-(y){\mathbbm 1}_{U}}(y)|^p}{|x-y|^{N+sp}}{\rm d}x{\rm d}y&\le \int_{D_1\times \mathcal{T}(D_2)}\frac{|{w_-(x)}-{w_-(y)}|^p}{|x-y|^{N+sp}}{\rm d}x{\rm d}y\\
&\le \int_{D_1\times \mathbb{R}^N}\frac{|{w_-(x)}-{w_-(y)}|^p}{|x-y|^{N+sp}}{\rm d}x{\rm d}y.
\end{aligned}
\end{equation}
If $D_1=\varnothing$ or $D_2=\varnothing$, \eqref{2412312226} will still hold. Due to the assumption $w\ge 0$ in $\mathbb{R}^N_+\backslash U$,  it is clear that
\begin{equation}\label{2412312227}
\begin{aligned}
\int_{D_1\times D_2^c}\frac{|{w_-(x){\mathbbm 1}_{U}}(x)-{w_-(y){\mathbbm 1}_{U}}(y)|^p}{|x-y|^{N+sp}}{\rm d}x{\rm d}y&=  \int_{D_1\times D_2^c}\frac{|{w_-(x)}-{w_-(y)}|^p}{|x-y|^{N+sp}}{\rm d}x{\rm d}y\\
&\le \int_{D_1\times \mathbb{R}^N}\frac{|{w_-(x)}-{w_-(y)}|^p}{|x-y|^{N+sp}}{\rm d}x{\rm d}y.
\end{aligned}
\end{equation}
On the other hand, we have
\begin{equation}\label{2605241515}
\begin{aligned}
\int_{(U\backslash D_1)\times \mathbb{R}^N}\frac{|{w_-(x){\mathbbm 1}_{U}}(x)-{w_-(y){\mathbbm 1}_{U}}(y)|^p}{|x-y|^{N+sp}}{\rm d}x{\rm d}y&=\int_{(U\backslash D_1)\times \mathbb{R}^N}\frac{|{w_-(y){\mathbbm 1}_{U}}(y)|^p}{|x-y|^{N+sp}}{\rm d}x{\rm d}y\\
&\le \int_{(U\backslash D_1)\times \mathbb{R}^N}\frac{|{w_-(y)}|^p}{|x-y|^{N+sp}}{\rm d}x{\rm d}y\\
&= \int_{(U\backslash D_1)\times \mathbb{R}^N}\frac{|{w_-(x)-w_-(y)}|^p}{|x-y|^{N+sp}}{\rm d}x{\rm d}y.\\
\end{aligned}
\end{equation}
By \eqref{2412312226}-\eqref{2605241515}, we get
\begin{equation}\label{2605241527}
\int_{U\times \mathbb{R}^N}\frac{|{w_-(x){\mathbbm 1}_{U}}(x)-{w_-(y){\mathbbm 1}_{U}}(y)|^p}{|x-y|^{N+sp}}{\rm d}x{\rm d}y\le 2\int_{U\times \mathbb{R}^N}\frac{|{w_-(x)}-{w_-(y)}|^p}{|x-y|^{N+sp}}{\rm d}x{\rm d}y.
\end{equation}
Moreover, $w=u-u^*\in \widetilde{W}^{s,p}(U)$ yields $w_-\in \widetilde{W}^{s,p}(U)$, then the right hand side of \eqref{2605241527} is finite, so  we have $ w_-{\mathbbm 1}_{U}\in \widetilde{W}^{s,p}(U)$. Thus the proof is completed.
\end{proof}
Next, we divide  $U\times U$ into four disjoint parts
 \begin{equation}\label{2603171604}
 \begin{aligned}
 A_1:=\{(x, y)\in U\times U: w(x)\ge 0,\; w(y)\ge 0\},\\
 A_2:=\{(x, y)\in U\times U: w(x)\ge 0,\; w(y)< 0\},\\
 A_3:=\{(x, y)\in U\times U: w(x)< 0,\; w(y)\ge 0\},\\
 A_4:=\{(x, y)\in U\times U: w(x)< 0,\; w(y)< 0\}.
 \end{aligned}
 \end{equation}
We claim 
\begin{lemma}\label{2602021059}
Let $N\ge 1$, $s\in (0, 1)$, $1<p<2$, and $u\in \widetilde{W}^{s,p}(U)\cap\widetilde{W}^{s,p}(\mathcal{T}(U))\cap  L^{\infty}(\mathbb{R}^N)$ where $U\subset  \mathbb{R}_+^N$ is a bounded open set.  If $w=u-u^*\ge 0$ a.e. in $\mathbb{R}^N_+\backslash U$, there holds 
\begin{equation}\label{2602010021}
\begin{aligned}
&\quad \int_{\mathbb{R}^{N}\times \mathbb{R}^{N}}\frac{(J_{u}(x, y)-J_{u^*}(x, y))\left(w_-(x){\mathbbm 1}_{U}(x)-w_-(y){\mathbbm 1}_{U}(y)\right)}{|x-y|^{N+sp}}{\rm d}x{\rm d}y\\
&\le-2^{p-1}(p-1)\|u\|^{p-2}_{L^\infty(\mathbb{R}^N)}\Big[\int_{A_3}\frac{w_-^2(x)}{|x-y|^{N+sp}}{\rm d}x{\rm d}y+\frac12\int_{A_4}\frac{\big|w_-(x)-w_-(y)\big|^2}{|x-y|^{N+sp}}{\rm d}x{\rm d}y\\
&\quad +\int_{A_4}\frac{w_-^2(x)}{|x-y^*|^{N+sp}}{\rm d}x{\rm d}y+\int_{U\times (\mathbb{R}^N_+\backslash U)}\frac{w_-^2(x)}{|x-y|^{N+sp}}{\rm d}x{\rm d}y\Big].\\
\end{aligned}
\end{equation}
\end{lemma}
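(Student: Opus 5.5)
Set $\varphi := w_-\mathbbm 1_U$, which lies in $W_0^{s,p}(U)$ by Lemma \ref{2602101903}, and write $M := \|u\|_{L^\infty(\mathbb{R}^N)}$. The plan is to split $\mathbb{R}^N\times\mathbb{R}^N$ by the supports of $\varphi$ and then use antisymmetry to fold every integral onto the upper half-space. The integrand vanishes unless $x\in U$ or $y\in U$, and it is symmetric under swapping $x$ and $y$. So the left-hand side equals
\[
\int_{U\times U}\frac{(J_u-J_{u^*})(x,y)\,(\varphi(x)-\varphi(y))}{|x-y|^{N+sp}}\,{\rm d}x\,{\rm d}y+2\int_{U\times U^c}\frac{(J_u-J_{u^*})(x,y)\,\varphi(x)}{|x-y|^{N+sp}}\,{\rm d}x\,{\rm d}y .
\]
I will further split $U^c = (\mathbb{R}^N_+\setminus U)\cup\mathbb{R}^N_-$, up to a null set. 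On $\mathbb{R}^N_-$ I substitute $y\mapsto y^*$. Since $J_u(x,y^*)=\mathcal P(u(x)-u^*(y))$ and $J_{u^*}(x,y^*)=\mathcal P(u^*(x)-u(y))$, this produces pairs with kernel $|x-y^*|^{-N-sp}$, where $y$ now ranges over $\mathbb{R}^N_+$. The $\mathbb{R}^N_-$ contribution coming from $y^*\in\mathcal T(U)$ is then recombined with the $U\times U$ integral.

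The basic pointwise tool is Lemma \ref{2605111444} with $p<2$. All arguments of $\mathcal P$ have modulus at most $2M$, so $\mathcal P(a)-\mathcal P(b)$ has the sign of $a-b$ and
\[
|\mathcal P(a)-\mathcal P(b)|\ge (p-1)(2M)^{p-2}|a-b| .
\]
Concretely, by the mean value theorem the derivative $(p-1)|\tau|^{p-2}$ is at least $(p-1)(2M)^{p-2}$ on $[-2M,2M]$. Note that $(2M)^{p-2}=2^{p-2}M^{p-2}$, and the extra factor $2$ from the symmetric double integral turns this into the constant $2^{p-1}(p-1)M^{p-2}$ in \eqref{2602010021}.

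Next I handle each region separately and show that every contribution is $\le 0$, with the listed squares extracted.

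\emph{The term $U\times(\mathbb{R}^N_+\setminus U)$.} Here $w(y)\ge0$. When $w(x)<0$, we have $(u(x)-u(y))-(u^*(x)-u^*(y)) = w(x)-w(y)\le -w_-(x)$. The difference of the $J$'s is therefore at most $-(p-1)(2M)^{p-2}w_-(x)$, and multiplying by $\varphi(x)=w_-(x)$ gives the last term.

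\emph{The mirrored terms.} For $y\in\mathbb{R}^N_+$, the mirrored difference is $(u(x)-u^*(y))-(u^*(x)-u(y)) = w(x)+w(y)$. With $w(x)<0$ and $w(y)\ge 0$ this is not signed, and I expect this to be the main obstacle. The way around it is to pair the direct kernel $|x-y|^{-N-sp}$, with difference $w(x)-w(y)$, against the mirrored kernel $|x-y^*|^{-N-sp}$, with difference $w(x)+w(y)$. One then uses $|x-y|\le|x-y^*|$ together with the monotonicity of $\mathcal P$, exactly as in the proof of Lemma \ref{2602101903} and the antisymmetric linear case \cite{JW_AMPA2016}. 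For $y\in\mathbb{R}^N_+\setminus U$ I intend simply to bound the mirrored term by $0$: it is nonpositive whenever $w(x)+w(y)\le0$, and the remaining part is absorbed by the direct term using the kernel comparison. For $(x,y)\in A_3$, combining the two kernels should leave at least $(p-1)(2M)^{p-2}w_-(x)^2|x-y|^{-N-sp}$. For $(x,y)\in A_4$, the direct part gives $\tfrac12|w_-(x)-w_-(y)|^2$ after symmetrization. The mirrored part has difference $-(w_-(x)+w_-(y))$, and multiplying by $\varphi(x)$ and symmetrizing gives $\ge w_-(x)^2$ with the kernel $|x-y^*|^{-N-sp}$. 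Finally, $A_2$ is symmetric to $A_3$, and $A_1$ contributes zero because $\varphi$ vanishes there.

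The delicate bookkeeping will be the sign cancellations in $A_2\cup A_3$ and on $(\mathbb{R}^N_+\setminus U)$ in the mirrored terms. There I will write the combined integrand as
\[
\big[\mathcal P(a)-\mathcal P(b)\big]K_1+\big[\mathcal P(c)-\mathcal P(d)\big]K_2 , \qquad K_1\ge K_2 ,
\]
and use $K_1\ge K_2$ to drop the nonnegative remainders.
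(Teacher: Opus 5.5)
Your proposal is correct and follows essentially the same route as the paper. It uses the same split into $U\times U$, $U\times(\mathbb{R}^N_+\setminus U)$ and the reflected pieces. On $A_3$ and on $U\times(\mathbb{R}^N_+\setminus U)$ it pairs the direct kernel $|x-y|^{-N-sp}$ with the mirrored kernel $|x-y^*|^{-N-sp}$, splitting into cases by the sign of $w(x)+w(y)$. On $A_4$ it uses direct estimates. When you write it out, make explicit the ordering $u(x)-u(y)\le u^*(x)-u(y)\le u(x)-u^*(y)\le u^*(x)-u^*(y)$ in the case $w(x)+w(y)>0$, noting that the two outer gaps both equal $w_-(x)$ (the paper's \eqref{2607301930}); this is what guarantees that your monotonicity/kernel-comparison step still leaves the required $w_-(x)^2$ term.
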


\begin{proof}
If $\|u\|_{L^{\infty}(\mathbb{R}^N)}=0$, \eqref{2602010021} is trivial. Hence we assume $\|u\|_{L^{\infty}(\mathbb{R}^N)}> 0$. From Lemma \ref{2602101903}, it follows that $\varphi=w_-{\mathbbm 1}_{U} \in W_0^{s,p}(U)$, then the left hand side  of \eqref{2602010021} is well defined that we separate as follows.
\begin{align*}
I&:=\int_{\mathbb{R}^{N}\times \mathbb{R}^{N}}\frac{(J_{u}(x, y)-J_{u^*}(x, y))(\varphi(x)-\varphi(y))}{|x-y|^{N+sp}}{\rm d}x{\rm d}y\\
&=\int_{U\times U}\frac{(J_{u}(x, y)-J_{u^*}(x, y))(w_-(x)-w_-(y))}{|x-y|^{N+sp}}{\rm d}x{\rm d}y\\
& \quad +2\int_{U\times U^c}\frac{(J_{u}(x, y)-J_{u^*}(x, y))w_-(x)}{|x-y|^{N+sp}}{\rm d}x{\rm d}y\\
&=\int_{U\times U}\frac{(J_{u}(x, y)-J_{u^*}(x, y))(w_-(x)-w_-(y))}{|x-y|^{N+sp}}{\rm d}x{\rm d}y\\
&\quad +2\int_{U\times (\mathbb{R}^N_+\backslash U)}\frac{(J_{u}(x, y)-J_{u^*}(x, y))w_-(x)}{|x-y|^{N+sp}}{\rm d}x{\rm d}y\\
& \quad +2\int_{U\times \mathbb{R}^N_+}\frac{(J_{u}(x, y^*)-J_{u^*}(x, y^*))w_-(x)}{|x-y^*|^{N+sp}}{\rm d}x{\rm d}y.
\end{align*}
There holds then
\begin{align*}
I & =\int_{U\times U}\frac{(J_{u}(x, y)-J_{u^*}(x, y))(w_-(x)-w_-(y))}{|x-y|^{N+sp}}{\rm d}x{\rm d}y\\
& \quad +2\int_{U\times (\mathbb{R}^N_+\backslash U)}\frac{(J_{u}(x, y)-J_{u^*}(x, y))w_-(x)}{|x-y|^{N+sp}}{\rm d}x{\rm d}y\\
&\quad +2\int_{U\times U}\frac{(J_{u}(x, y^*)-J_{u^*}(x, y^*))w_-(x)}{|x-y^*|^{N+sp}}{\rm d}x{\rm d}y\\
& \quad +2\int_{U\times (\mathbb{R}^N_+\backslash U)}\frac{(J_{u}(x, y^*)-J_{u^*}(x, y^*))w_-(x)}{|x-y^*|^{N+sp}}{\rm d}x{\rm d}y\\
&~~=:  I_1+I_2+I_3+I_4.
\end{align*}
Using the region decomposition in \eqref{2603171604}, we get
\begin{equation}\label{2412201400}
\begin{aligned}
I_1& =-\int_{A_2}\frac{(J_{u}(x, y)-J_{u^*}(x, y))w_-(y)}{|x-y|^{N+sp}}{\rm d}x{\rm d}y+\int_{A_3}\frac{(J_{u}(x, y)-J_{u^*}(x, y))w_-(x)}{|x-y|^{N+sp}}{\rm d}x{\rm d}y\\
&\quad +\int_{A_4}\frac{(J_{u}(x, y)-J_{u^*}(x, y))\big(w_-(x)-w_-(y)\big)}{|x-y|^{N+sp}}{\rm d}x{\rm d}y\\
& = 2\int_{A_3}\frac{(J_{u}(x, y)-J_{u^*}(x, y))w_-(x)}{|x-y|^{N+sp}}{\rm d}x{\rm d}y\\
&\quad +\int_{A_4}\frac{(J_{u}(x, y)-J_{u^*}(x, y))\big(w_-(x)-w_-(y)\big)}{|x-y|^{N+sp}}{\rm d}x{\rm d}y\\
& =: I_{11}+I_{12},
\end{aligned}
\end{equation}
and
\begin{equation}\label{24122014011}
\begin{aligned}
I_3&=2\int_{A_3}\frac{(J_{u}(x, y^*)-J_{u^*}(x, y^*))w_-(x)}{|x-y^*|^{N+sp}}{\rm d}x{\rm d}y+2\int_{A_4}\frac{(J_{u}(x, y^*)-J_{u^*}(x, y^*))w_-(x)}{|x-y^*|^{N+sp}}{\rm d}x{\rm d}y\\
&=:I_{31}+I_{32}.
\end{aligned}
\end{equation}
In the sequel, we will estimate respectively the integrals $I_{11}+I_{31}$, $I_2+I_4$, $I_{12}$ and $I_{32}$.

\smallskip
\noindent {\bf Estimates on $I_{11}+I_{31}$.} We claim that  
\begin{align}\label{2606120109}
\begin{split}
& \quad \frac{(J_{u}(x, y^*)-J_{u^*}(x, y^*))w_-(x)}{|x-y^*|^{N+sp}}+\frac{(J_{u}(x, y)-J_{u^*}(x, y))w_-(x)}{|x-y|^{N+sp}}\\
& \le - \frac{2^{p-2}(p-1)\|u\|^{p-2}_{L^\infty(U\cup \mathcal{T}(U))}w_-^2(x)}{|x-y|^{N+sp}}, \qquad \mbox{for } \; (x, y)\in A_3.
\end{split}
\end{align}
We consider two subcases. 

\smallskip
\noindent{\sl Case 1: ${u^*(x)-u(y)}\ge {u(x)-u^*(y)}$.} Here we have 
\begin{equation}\label{2603171939-7}
u^*(x)-u^*(y)-u(x)+u(y)\ge w_-(x).
\end{equation}
Applying \eqref{2603171939-7} and Lemma \ref{Lo}, for such $(x, y)\in A_3$, there holds
\begin{align*}
&\quad \frac{(J_{u}(x, y^*)-J_{u^*}(x, y^*))w_-(x)}{|x-y^*|^{N+sp}}+\frac{(J_{u}(x, y)-J_{u^*}(x, y))w_-(x)}{|x-y|^{N+sp}}\\
& = \frac{(p-1)w_-(x)}{|x-y^*|^{N+sp}}\int^{u(x)-u^*(y)}_{u^*(x)-u(y)} |t|^{p-2} {\rm d}t- \frac{(p-1)w_-(x)}{|x-y|^{N+sp}}\int^{u^*(x)-u^*(y)}_{u(x)-u(y)} |t|^{p-2} {\rm d}t\\
&\le - \frac{(p-1)w_-(x)}{|x-y|^{N+sp}}\int^{u^*(x)-u^*(y)}_{u(x)-u(y)} |t|^{p-2} {\rm d}t\\  
&\le- \frac{(p-1)w_-^2(x)}{|x-y|^{N+sp}}\min\big\{|u^*(x)-u^*(y)|^{p-2}, |u(x)-u(y)|^{p-2}\big\}\\  
&\le- \frac{2^{p-2}(p-1)\|u\|^{p-2}_{L^\infty(U\cup \mathcal{T}(U))}w_-^2(x)}{|x-y|^{N+sp}}.
\end{align*}
For the above first equality, we used 
\begin{equation}\label{2603171939}
J_{u}(x, y^*)-J_{u^*}(x, y^*) = (p-1)\int^{u(x)-u^*(y)}_{u^*(x)-u(y)} |t|^{p-2} {\rm d}t,
\end{equation}
and 
\begin{equation}\label{2603171940}
J_{u}(x, y)-J_{u^*}(x, y) = -(p-1)\int_{u(x)-u(y)}^{u^*(x)-u^*(y)} |t|^{p-2} {\rm d}t.
\end{equation}

\noindent 
{\sl Case 2: ${u^*(x)-u(y)}\le {u(x)-u^*(y)}$.} As $(x, y)\in A_3$, there holds
\begin{equation}\label{2607301930}
u^*(x)-u^*(y)\ge u(x)-u^*(y)\ge u^*(x)-u(y)\ge u(x)-u(y).
\end{equation}
By \eqref{2603171939}-\eqref{2607301930} and Lemma \ref{Lo}, for such $(x, y)\in A_3$,
\begin{align*}
&\quad \frac{(J_{u}(x, y^*)-J_{u^*}(x, y^*))w_-(x)}{|x-y^*|^{N+sp}}+\frac{(J_{u}(x, y)-J_{u^*}(x, y))w_-(x)}{|x-y|^{N+sp}}\\
&\le\frac{(p-1)w_-(x)}{|x-y|^{N+sp}}\int^{u(x)-u^*(y)}_{u^*(x)-u(y)} |t|^{p-2} {\rm d}t- \frac{(p-1)w_-(x)}{|x-y|^{N+sp}}\int^{u^*(x)-u^*(y)}_{u(x)-u(y)} |t|^{p-2} {\rm d}t\\
&=\frac{(p-1)w_-(x)}{|x-y|^{N+sp}}\Big[\int^{u(x)-u^*(y)}_{u^*(x)-u(y)} |t|^{p-2} {\rm d}t- \int^{u^*(x)-u^*(y)}_{u(x)-u(y)} |t|^{p-2} {\rm d}t\Big]\\
&=-\frac{(p-1)w_-(x)}{|x-y|^{N+sp}}\Big[\int^{u^*(x)-u^*(y)}_{u(x)-u^*(y)} |t|^{p-2} {\rm d}t+ \int^{u^*(x)-u(y)}_{u(x)-u(y)} |t|^{p-2} {\rm d}t\Big]\\
&\le- \frac{2(p-1)w_-^2(x)}{|x-y|^{N+sp}}\min\big\{|u^*(x)-u^*(y)|^{p-2}, |u(x)-u(y)|^{p-2}\big\}\\
&\le- \frac{2^{p-1}(p-1)\|u\|^{p-2}_{L^\infty(U\cup \mathcal{T}(U))}w_-^2(x)}{|x-y|^{N+sp}}.
\end{align*}
Then the claim \eqref{2606120109} holds true, which yields 
\begin{equation}\label{2412201402}
I_{11}+I_{31}
\le-2^{p-1}(p-1)\|u\|^{p-2}_{L^\infty(U\cup \mathcal{T}(U))}\int_{A_3}\frac{w_-^2(x)}{|x-y|^{N+sp}}{\rm d}x{\rm d}y.
\end{equation}

\noindent
{\bf Estimates on $I_2+I_4$.} We claim that  
\begin{align}\label{2603251150}
\begin{split}
& \quad \frac{\big(J_{u}(x, y)-J_{u^*}(x, y)\big)w_-(x)}{|x-y|^{N+sp}}+\frac{\big(J_{u}(x, y^*)-J_{u^*}(x, y^*)\big)w_-(x)}{|x-y^*|^{N+sp}}\\
& \le -\frac{2^{p-2}(p-1)\|u\|^{p-2}_{L^\infty(\mathbb{R}^N)}w_-^2(x)}{|x-y|^{N+sp}}, \qquad \mbox{for }\; (x, y)\in U\times (\mathbb{R}^N_+\backslash U).
\end{split}
\end{align}
As above, we consider two different situations.

\smallskip
\noindent
{\sl Case 1: ${u^*(x)-u(y)}\ge {u(x)-u^*(y)}$.} 
By \eqref{2603171939}, \eqref{2603171940}, $w\ge 0$ in $\mathbb{R}^N_+\backslash U$ and Lemma \ref{Lo}, it holds that for $(x, y)\in U\times (\mathbb{R}^N_+\backslash U)$,  
\begin{align*}
&\quad \frac{\big(J_{u}(x, y)-J_{u^*}(x, y)\big)w_-(x)}{|x-y|^{N+sp}}+\frac{\big(J_{u}(x, y^*)-J_{u^*}(x, y^*)\big)w_-(x)}{|x-y^*|^{N+sp}}\\
&= \frac{(p-1)w_-(x)}{|x-y|^{N+sp}}\int_{u^*(x)-u^*(y)}^{u(x)-u(y)} |t|^{p-2} {\rm d}t- \frac{(p-1)w_-(x)}{|x-y^*|^{N+sp}}\int^{u^*(x)-u(y)}_{u(x)-u^*(y)} |t|^{p-2} {\rm d}t\\ 
&\le \frac{(p-1)w_-(x)}{|x-y|^{N+sp}}\int_{u^*(x)-u^*(y)}^{u(x)-u(y)} |t|^{p-2} {\rm d}t\\
&\le -\frac{(p-1)w_-^2(x)}{|x-y|^{N+sp}}\min\big\{|u^*(x)-u^*(y)|^{p-2}, |u(x)-u(y)|^{p-2}\}\\
&\le -\frac{2^{p-2}(p-1)\|u\|^{p-2}_{L^\infty(\mathbb{R}^N)}w_-^2(x)}{|x-y|^{N+sp}}.
\end{align*}

\noindent
{\sl Case 2: ${u^*(x)-u(y)}\le {u(x)-u^*(y)}$.} Thus \eqref{2607301930} still holds in $U\times (\mathbb{R}^N_+\backslash U)$.
Hence by \eqref{2603171939}-\eqref{2607301930}, $w\ge 0$ in $\mathbb{R}^N_+\backslash U$ and Lemma \ref{Lo}, for $(x, y)\in U\times (\mathbb{R}^N_+\backslash U)$, we have
\begin{align*}
&\quad \frac{\big(J_{u}(x, y)-J_{u^*}(x, y)\big)w_-(x)}{|x-y|^{N+sp}}+\frac{\big(J_{u}(x, y^*)-J_{u^*}(x, y^*)\big)w_-(x)}{|x-y^*|^{N+sp}}\\
&= \frac{(p-1)w_-(x)}{|x-y|^{N+sp}}\int_{u^*(x)-u^*(y)}^{u(x)-u(y)} |t|^{p-2} {\rm d}t+ \frac{(p-1)w_-(x)}{|x-y^*|^{N+sp}}\int_{u^*(x)-u(y)}^{u(x)-u^*(y)} |t|^{p-2} {\rm d}t\\ 
&\le -\frac{(p-1)w_-(x)}{|x-y|^{N+sp}}\int^{u^*(x)-u^*(y)}_{u(x)-u(y)} |t|^{p-2} {\rm d}t+\frac{(p-1)w_-(x)}{|x-y|^{N+sp}}\int_{u^*(x)-u(y)}^{u(x)-u^*(y)} |t|^{p-2} {\rm d}t\\
&=-\frac{(p-1)w_-(x)}{|x-y|^{N+sp}}\Big[\int^{u^*(x)-u^*(y)}_{u(x)-u^*(y)} |t|^{p-2} {\rm d}t+ \int_{u(x)-u(y)}^{u^*(x)-u(y)} |t|^{p-2} {\rm d}t\Big]\\
&\le -\frac{2(p-1)w_-^2(x)}{|x-y|^{N+sp}}\min\big\{|u^*(x)-u^*(y)|^{p-2}, |u(x)-u(y)|^{p-2}\big\}\\
&\le -\frac{2^{p-1}(p-1)\|u\|^{p-2}_{L^\infty(\mathbb{R}^N)}w_-^2(x)}{|x-y|^{N+sp}}.
\end{align*}
Then the claim \eqref{2603251150} is valid, which means that
\begin{equation}\label{2412201401}
I_2+ I_4\le -2^{p-1}(p-1)\|u\|^{p-2}_{L^\infty(\mathbb{R}^N)}\int_{U\times (\mathbb{R}^N_+\backslash U)}\frac{w_-^2(x)}{|x-y|^{N+sp}}{\rm d}x{\rm d}y.
\end{equation}

\noindent
{\bf Estimates on $I_{12}$.} As before, using Lemma \ref{Lo},  for $(x, y)\in A_4$,
\begin{align*}
&\quad -(J_{u}(x, y)-J_{u^*}(x, y))\big(w_-(x)-w_-(y)\big)\\
&=(p-1)\big(w_-(x)-w_-(y)\big)\int_{u(x)-u(y)}^{u^*(x)-u^*(y)}|t|^{p-2} {\rm d}t\\
&\ge (p-1)\min\big\{|u^*(x)-u^*(y)|^{p-2}, |u(x)-u(y)|^{p-2}\big\}|w_-(x)-w_-(y)|^2\\
&\ge 2^{p-2}(p-1)\|u\|_{L^{\infty}(U\cup \mathcal{T}(U))}^{p-2}|w_-(x)-w_-(y)|^2.
\end{align*}
We deduce then
\begin{equation}\label{2603251505}
\begin{aligned}
I_{12}\le -2^{p-2}(p-1) \|u\|_{L^{\infty}(U\cup \mathcal{T}(U))}^{p-2}\int_{A_4}\frac{\big|w_-(x)-w_-(y)\big|^2}{|x-y|^{N+sp}}{\rm d}x{\rm d}y.
\end{aligned}
\end{equation}

\smallskip
\noindent
{\bf Estimates on $I_{32}$.} By direct computations, 
\begin{equation}\label{2603150136}
\begin{aligned}
I_{32}&=2(p-1)\int_{A_4}\frac{w_-(x)}{|x-y^*|^{N+sp}}\int_{u^*(x)-u(y)}^{u(x)-u^*(y)} |t|^{p-2} {\rm d}t {\rm d}x{\rm d}y\\
&\le -2(p-1)\int_{A_4}\frac{w_-^2(x)\min\big\{|u(x)-u^*(y)|^{p-2}, |u^*(x)-u(y)|^{p-2}\big\}}{|x-y^*|^{N+sp}} {\rm d}x{\rm d}y\\
&\le -2^{p-1}(p-1)\|u\|^{p-2}_{L^\infty(U\cup \mathcal{T}(U))}\int_{A_4}\frac{w_-^2(x)}{|x-y^*|^{N+sp}} {\rm d}x{\rm d}y.\\
\end{aligned}
\end{equation}

From \eqref{2412201400},  \eqref{24122014011}, \eqref{2412201402}, \eqref{2412201401}, \eqref{2603251505} and \eqref{2603150136}, we conclude that
\begin{align*}
I&=I_{11}+I_{12}+I_2+I_{31}+I_{32}+I_4\\
& =(I_{11}+I_{31})+(I_2+I_4)+I_{12}+I_{32}\\
&\le-c_p\|u\|^{p-2}_{L^\infty(U\cup \mathcal{T}(U))}\times \Big[\int_{A_3}\frac{w_-^2(x)}{|x-y|^{N+sp}}{\rm d}x{\rm d}y+\frac12\int_{A_4}\frac{\big|w_-(x)-w_-(y)\big|^2}{|x-y|^{N+sp}}{\rm d}x{\rm d}y\\
&\quad  +\int_{A_4}\frac{w_-^2(x)}{|x-y^*|^{N+sp}}{\rm d}x{\rm d}y\Big] -c_p\|u\|^{p-2}_{L^\infty(\mathbb{R}^N)}\int_{U\times (\mathbb{R}^N_+\backslash U)}\frac{w_-^2(x)}{|x-y|^{N+sp}}{\rm d}x{\rm d}y,
\end{align*}
where $c_p=2^{p-1}(p-1)$, which implies readily \eqref{2602010021}. Thus the proof is completed.
\end{proof}

To establish Theorem \ref{2508070852}, we should estimate the right hand side of \eqref{2602010021}, which is the aim of the following Proposition. Notice that for $p \le 2$,
$$W_0^{s, p}(U)\cap L^{\infty}(U) \subset W_0^{\frac{sp}{2}, 2}(U).$$ Indeed, for any $v\in W_0^{s, p}(U)\cap L^{\infty}(U)$,
\[
\int_{\mathbb{R}^N}\int_{\mathbb{R}^N}\frac{|v(x)-v(y)|^2}{|x-y|^{N+sp}}{\rm d}x{\rm d}y\le 2^{2-p}\|v\|_{L^{\infty}(U)}^{2-p}\int_{\mathbb{R}^N}\int_{\mathbb{R}^N}\frac{|v(x)-v(y)|^p}{|x-y|^{N+sp}}{\rm d}x{\rm d}y.
\]
\begin{proposition} \label{2602021049}
Let $N\ge 1$, $s\in (0, 1)$, $1<p<2$, $U\subset  \mathbb{R}_+^N$ be a bounded open set and $$u\in \widetilde{W}^{s,p}(U)\cap \widetilde{W}^{s,p}(\mathcal{T}(U))\cap L^\infty(\mathbb{R}^N).$$  If  $w=u-u^*\ge 0$ a.e. in $\mathbb{R}^N_+\backslash U$, then we have
\begin{align}\label{2602021036}
\begin{split}
& \quad -\int_{\mathbb{R}^{2N}}\frac{(J_{u}(x, y)-J_{u^*}(x, y))\left(w_-(x){\mathbbm 1}_{U}(x)-w_-(y){\mathbbm 1}_{U}(y)\right)}{|x-y|^{N+sp}}{\rm d}x{\rm d}y\\
& \ge C \|u\|_{L^{\infty}(\mathbb{R}^N)}^{p-2}\|w_-{\mathbbm 1}_{U}\|_{W_0^{\frac{sp}{2},2}(U)}^2,
\end{split}
\end{align}
where $C$ depends only on $p$.
\end{proposition}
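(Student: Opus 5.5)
We deduce \eqref{2602021036} from Lemma \ref{2602021059}. It then suffices to show that the bracket on the right-hand side of \eqref{2602010021} controls, up to a constant depending only on $p$, the Gagliardo energy
\[
[\varphi]^2_{\frac{sp}{2},2}=\int_{\mathbb{R}^N}\int_{\mathbb{R}^N}\frac{|\varphi(x)-\varphi(y)|^2}{|x-y|^{N+sp}}{\rm d}x{\rm d}y,\qquad \varphi=w_-{\mathbbm 1}_U .
\]
This is the norm of $W_0^{\frac{sp}{2},2}(U)$, since $U$ is bounded. The constant $2^{p-1}(p-1)$ already depends only on $p$. Since $p-2<0$, the factors $\|u\|^{p-2}_{L^\infty(U\cup\mathcal T(U))}$ are at least $\|u\|^{p-2}_{L^\infty(\mathbb{R}^N)}$, so only the $\mathbb{R}^N$ norm needs to appear.

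We split $\mathbb{R}^N\times\mathbb{R}^N$ according to where $x,y$ lie and use symmetry of the integrand:
\[
[\varphi]^2_{\frac{sp}{2},2}=\int_{U\times U}\frac{|\varphi(x)-\varphi(y)|^2}{|x-y|^{N+sp}}+2\int_{U\times(\mathbb{R}^N_+\backslash U)}\frac{w_-^2(x)}{|x-y|^{N+sp}}+2\int_{U\times \mathbb{R}^N_-}\frac{w_-^2(x)}{|x-y|^{N+sp}} .
\]
The second term matches the last term of the bracket exactly.

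Next we decompose the $U\times U$ integral with $A_1,\dots,A_4$.
\begin{itemize}
\item On $A_1$ the integrand vanishes.
\item On $A_2\cup A_3$ it equals $w_-^2$ of the negative point. By symmetry these contribute $2\int_{A_3}\frac{w_-^2(x)}{|x-y|^{N+sp}}$.
\item On $A_4$ the term appears exactly, with a factor $\frac12$ in the bracket.
\end{itemize}

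The third term is the only one needing care. Substitute $y\to y^*$, so it becomes $2\int_{U\times\mathbb{R}^N_+}\frac{w_-^2(x)}{|x-y^*|^{N+sp}}$. Split $y\in\mathbb{R}^N_+$ into $\mathbb{R}^N_+\backslash U$, $\{y\in U: w(y)\ge0\}$ and $\{y\in U: w(y)<0\}$. For $x,y\in\mathbb{R}^N_+$ we have $|x-y^*|\ge|x-y|$. Hence the first piece is bounded by the $U\times(\mathbb{R}^N_+\backslash U)$ term. The second piece is bounded by the integral over $A_1\cup A_3$; on $A_1$ we have $w_-(x)=0$, so it reduces to the $A_3$ term. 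The third piece is exactly $\int_{A_4}\frac{w_-^2(x)}{|x-y^*|^{N+sp}}$, which appears in the bracket.

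Summing up gives
\[
[\varphi]^2_{\frac{sp}{2},2}\le 4\Big[\int_{A_3}\cdots+\tfrac12\int_{A_4}\cdots+\int_{A_4}\frac{w_-^2(x)}{|x-y^*|^{N+sp}}+\int_{U\times(\mathbb{R}^N_+\backslash U)}\cdots\Big].
\]
Combined with \eqref{2602010021}, this yields \eqref{2602021036} with, for instance, $C=2^{p-3}(p-1)$.

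The main point to verify is that the $\mathbb{R}^N_-$ part is fully absorbed using only $|x-y^*|\ge|x-y|$. Finiteness of all quantities follows from Lemma \ref{2602101903} and the inclusion $W_0^{s,p}\cap L^\infty\subset W_0^{\frac{sp}{2},2}$ noted above.
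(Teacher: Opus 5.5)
Your argument is correct and is essentially the paper's proof. It uses the same ingredients: Lemma \ref{2602021059}, the $A_2\leftrightarrow A_3$ symmetry on $U\times U$, the reflection $y\mapsto y^*$ together with $|x-y|\le|x-y^*|$, and the splitting of the reflected copy of $U$ by the sign of $w$. It yields the same bound, $[w_-{\mathbbm 1}_U]^2_{\frac{sp}{2},2}\le 4$ times the bracket in \eqref{2602010021}, hence $C=2^{p-3}(p-1)$. The only difference is bookkeeping: you bound each piece of the Gagliardo energy directly, whereas the paper bounds the bracket from below by halving the $U\times(\mathbb{R}^N_+\backslash U)$ term and adding and subtracting $Q_\omega$, so your version is slightly more streamlined.
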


\begin{proof} Assume again $\|u\|_{L^{\infty}(\mathbb{R}^N)}>0$. Clearly,
\begin{equation}\label{2603251506}
\int_{A_3}\frac{w_-^2(x)}{|x-y|^{N+sp}}{\rm d}x{\rm d}y=\int_{A_2}\frac{w_-^2(y)}{|x-y|^{N+sp}}{\rm d}x{\rm d}y,
\end{equation}
where $A_2$, $A_3$ are given in \eqref{2603171604}. Using  \eqref{2603251506}, we get
\begin{equation}\label{2603271227}
\begin{aligned}
\int_{A_3}\frac{w_-^2(x)}{|x-y|^{N+sp}}{\rm d}x{\rm d}y+&\frac12\int_{A_4}\frac{\big|w_-(x)-w_-(y)\big|^2}{|x-y|^{N+sp}}{\rm d}x{\rm d}y= \frac12\int_{U\times U}\frac{\big|w_-(x)-w_-(y)\big|^2}{|x-y|^{N+sp}}{\rm d}x{\rm d}y.\\
\end{aligned}
\end{equation}
Next, let $D=\mathbb{R}^N\backslash (U\cup \mathcal{T}(U))$. Since  $|x-y|\le |x-y^*|$, there holds
\begin{align*}
\int_{U\times (\mathbb{R}^N_+\backslash U)}\frac{w_-^2(x)}{|x-y|^{N+sp}}{\rm d}x{\rm d}y
&\ge \frac12\int_{U\times (\mathbb{R}^N_+\backslash U)}\frac{w_-^2(x)}{|x-y|^{N+sp}}{\rm d}x{\rm d}y+\frac12\int_{U\times (\mathbb{R}^N_+\backslash U)}\frac{w_-^2(x)}{|x-y^*|^{N+sp}}{\rm d}x{\rm d}y\\
&= \frac12\int_{U\times (\mathbb{R}^N_+\backslash U)}\frac{w_-^2(x)}{|x-y|^{N+sp}}{\rm d}x{\rm d}y+\frac12\int_{U\times \mathcal{T}(\mathbb{R}^N_+\backslash U)}\frac{w_-^2(x)}{|x-y|^{N+sp}}{\rm d}x{\rm d}y\\
&=\frac12\int_{U\times D}\frac{|w_-(x){\mathbbm 1}_{U}(x)-w_-(y){\mathbbm 1}_{U}(y)|^2}{|x-y|^{N+sp}}{\rm d}x{\rm d}y\\
&=\frac14\int_{(U\times D)\cup (D\times U)}\frac{|w_-(x){\mathbbm 1}_{U}(x)-w_-(y){\mathbbm 1}_{U}(y)|^2}{|x-y|^{N+sp}}{\rm d}x{\rm d}y\\
&=\frac14\int_{(\mathbb R^N\times D)\cup (D\times\mathbb R^N)}\frac{|w_-(x){\mathbbm 1}_{U}(x)-w_-(y){\mathbbm 1}_{U}(y)|^2}{|x-y|^{N+sp}}{\rm d}x{\rm d}y.
\end{align*}
Therefore, as $D^c = U\cup \mathcal{T}(U)$, we get
\begin{align}\label{2603271228}
\begin{split}
& \quad \int_{U\times (\mathbb{R}^N_+\backslash U)}\frac{w_-^2(x)}{|x-y|^{N+sp}}{\rm d}x{\rm d}y\\
&\ge \frac14\int_{\mathbb{R}^{2N}\backslash (U\times U)}\frac{|w_-(x){\mathbbm 1}_{U}(x)-w_-(y){\mathbbm 1}_{U}(y)|^2}{|x-y|^{N+sp}}{\rm d}x{\rm d}y\\
& \quad -\frac14\int_{\mathcal{T}(U)\times U}\frac{w_-^2(y)}{|x-y|^{N+sp}}{\rm d}x{\rm d}y - \frac14\int_{U \times\mathcal{T}(U)}\frac{w_-^2(x)}{|x-y|^{N+sp}}{\rm d}x{\rm d}y\\
&= \frac14\int_{\mathbb{R}^{2N}\backslash (U\times U)}\frac{|w_-(x){\mathbbm 1}_{U}(x)-w_-(y){\mathbbm 1}_{U}(y)|^2}{|x-y|^{N+sp}}{\rm d}x{\rm d}y-\frac{Q_\omega}{2},
\end{split}
\end{align}
where
\begin{equation}\label{2603271233}
Q_\omega=\int_{U\times \mathcal{T}(U)}\frac{w_-^2(x)}{|x-y|^{N+sp}} {\rm d}x{\rm d}y.
 \end{equation}
Take
 \begin{equation*}
 \begin{aligned}
 E_1:=\{(x, y)\in U\times \mathcal{T}(U): w(x)< 0,\; w(y)\ge 0\},\\
 E_2:=\{(x, y)\in U\times \mathcal{T}(U): w(x)< 0,\; w(y)< 0\}.
 \end{aligned}
 \end{equation*}
Since $w(y)=-w(y^*)$, we obtain
\begin{align*}
Q_1:=\int_{E_1}\frac{w_-^2(x)}{|x-y|^{N+sp}} {\rm d}x{\rm d}y= \int_{A_4}\frac{w_-^2(x)}{|x-y^*|^{N+sp}} {\rm d}x{\rm d}y,
 \end{align*}
and
\begin{align*}
Q_2:=\int_{E_2}\frac{w_-^2(x)}{|x-y|^{N+sp}} {\rm d}x{\rm d}y=\int_{A_3}\frac{w_-^2(x)}{|x-y^*|^{N+sp}} {\rm d}x{\rm d}y\le \int_{A_3}\frac{w_-^2(x)}{|x-y|^{N+sp}} {\rm d}x{\rm d}y.
 \end{align*}
Applying \eqref{2603271227}, it follows that 
\begin{equation}\label{2603271237}
\begin{split}
Q_\omega = Q_1+Q_2 & \le \int_{A_3}\frac{w_-^2(x)}{|x-y|^{N+sp}} {\rm d}x{\rm d}y+\int_{A_4}\frac{w_-^2(x)}{|x-y^*|^{N+sp}} {\rm d}x{\rm d}y\\
& \le \frac12\int_{U\times U}\frac{\big|w_-(x)-w_-(y)\big|^2}{|x-y|^{N+sp}}{\rm d}x{\rm d}y + \int_{A_4}\frac{w_-^2(x)}{|x-y^*|^{N+sp}} {\rm d}x{\rm d}y.
\end{split}
 \end{equation}
On the other hand, from \eqref{2608311117}-\eqref{2603311321}, we know that 
\begin{equation}\label{2608311123}
\|w_-{\mathbbm 1}_{U}\|_{W_0^{\frac{sp}{2},2}(U)}^2=\int_{\mathbb{R}^N}\int_{\mathbb{R}^N}\frac{|w_-(x){\mathbbm 1}_{U}(x)-w_-(x){\mathbbm 1}_{U}(y)|^2}{|x-y|^{N+sp}}{\rm d}x{\rm d}y.
\end{equation}
Combining \eqref{2603271227}-\eqref{2608311123} and Lemma \ref{2602021059}, we conclude \eqref{2602021036}.
\end{proof}
\begin{remark}
One can check step by step that Lemma \ref{2602021059} and Proposition \ref{2602021049} still hold if $p=2$. Moreover, as $p=2$, the assumption $u\in L^{\infty}(\mathbb{R}^N)$ can be removed.
\end{remark}

Now we are able to complete the proof of Theorem \ref{2508070852}.

\begin{proof}[\bf Proof of Theorem \ref{2508070852} completed.] Suppose $\|u\|_{L^\infty(\mathbb{R}^N)}> 0$, we choose $\varphi=-w_-{\mathbbm 1}_{U}$ as a test function. Since $u$ satisfies \eqref{2412171452}, there holds
\begin{equation}\label{24171502}
\int_{\mathbb{R}^{2N}}\frac{(J_{u}(x, y)-J_{u^*}(x, y))(\varphi(x)-\varphi(y))}{|x-y|^{N+sp}}{\rm d}x{\rm d}y\le \int_{U}V(x)w_-^2 {\rm d}x.
\end{equation}
Combining with Proposition \ref{2602021049}, H\"older inequality and Lemma \ref{2603291635}, we deduce that
\begin{align*}
\|V_+\|_{L^{\infty}(U)}\|w_-\|_{L^2(U)}^2 \ge\int_{U}V(x)w_-^2 {\rm d}x &\ge C_1\|u\|_{L^{\infty}(\mathbb{R}^N)}^{p-2}\|w_-{\mathbbm 1}_{U}\|_{W_0^{\frac{sp}{2},2}(U)}^2\\
&\ge C_1 \lambda_{1, \frac{sp}{2}, 2}(U)\|u\|_{L^{\infty}(\mathbb{R}^N)}^{p-2}\|w_-\|_{L^2(U)}^2\\
&\ge C|U|^{-\frac{sp}{N}}\|u\|_{L^{\infty}(\mathbb{R}^N)}^{p-2}\|w_-\|_{L^2(U)}^2,
\end{align*}
where $C_1$, $C$ depend only on $N, p$ and $s$. Thus $\|w_-\|_{L^2(U)}=0$ if \eqref{2603291622} holds, 
which means that $u\ge u^*$ a.e. in $U$.
\end{proof}

\section{Strong comparison principle}\label{2606091353}

In this section, we will prove Theorem \ref{2605211200}. The main idea is constructing  barrier functions and applying the weak comparison principle in Theorem \ref{2508070852}.

\begin{lemma}\label{2604131506}
Let $N\ge 1$, $s\in (0, 1)$, $1<p<2$, $\mathcal{U}$ be an open set satisfying $\overline{\mathcal U} \Subset \mathbb{R}_+^N$, and $K\Subset \mathbb{R}^N_+\backslash \overline{\mathcal{U}}$. Then there exists  $C=C(\mathcal{U}, K, N, s, p)>0$  such that given any  $\delta>0$ and $v\in \widetilde{W}^{s,p}(\mathcal{U})\cap \widetilde{W}^{s,p}(\mathcal{T}(\mathcal{U}))\cap L^{\infty}(\mathbb{R}^N)$, if
\begin{equation}\label{2605111909}
|v(x)-v^*(x)|\le \Big[\frac{\beta (p-1)(2\|v\|_{L^{\infty}(K\cup \mathcal{U})}+\delta)^{p-2} \, {\rm dist}(\mathcal{U}, \mathcal{T}(K))^{N+sp}\delta}{2^{3-p}{\rm diam}(\mathcal{U}, K)^{N+sp}}\Big]^{\frac{1}{p-1}} \quad \mbox{in }\; \mathcal{U},
\end{equation}
with ${\rm diam}(\mathcal{U}, K)=\sup\{|x-y|: x\in \mathcal{U}, y\in K\}$ and
\begin{equation}\label{2605111536}
\beta:=1-\underset{x\in \mathcal{U}, y\in K}{\sup}\frac{|x-y|^{N+sp}}{|x-y^*|^{N+sp}}\in (0, 1),
\end{equation}
one has then, in weak sense,
\begin{align*}
& \quad (-\Delta)_p^s (v-\delta {\mathbbm 1}_{K})-(-\Delta)_p^s (v^*-\delta {\mathbbm 1}_{\mathcal{T}(K)})\\
&\ge  (-\Delta)_p^s v- (-\Delta)_p^s v^* + C(\|v\|_{L^{\infty}(K\cup \mathcal{U})}+\delta)^{p-2}\delta\quad \mbox{in }\; \mathcal{U}.
\end{align*}
\end{lemma}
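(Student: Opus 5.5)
The plan is to compute the two perturbations exactly through Lemma \ref{2605211341}, and then show that their difference has a positive lower bound on $\mathcal{U}$, using the smallness \eqref{2605111909} of $w:=v-v^*$.

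\textbf{Step 1: the two perturbations.} Lemma \ref{2605211341} is stated for solutions, but its proof is only a computation of the weak form. I apply it on $\Omega=\mathcal{U}$ twice, with $g:=(-\Delta)_p^s v$, resp. $(-\Delta)_p^s v^*$, regarded as elements of $W_0^{s,p}(\mathcal{U})'$.
\begin{itemize}
\item First perturbation: $-\delta{\mathbbm 1}_K$, whose support $K$ has positive distance from $\mathcal{U}$.
\item Second perturbation: $-\delta{\mathbbm 1}_{\mathcal{T}(K)}$, whose support lies in $\mathbb{R}^N_-$, so its distance from $\mathcal{U}$ is positive as well.
\end{itemize}
The tail integrability conditions are trivial, since both perturbations are bounded with compact support. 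This yields, weakly in $\mathcal{U}$,
\[
(-\Delta)_p^s (v-\delta {\mathbbm 1}_{K})-(-\Delta)_p^s (v^*-\delta {\mathbbm 1}_{\mathcal{T}(K)})=(-\Delta)_p^s v-(-\Delta)_p^s v^*+h_1-h_2 .
\]
Set $F(t):=\mathcal{P}(t+\delta)-\mathcal{P}(t)$. In $h_2$ I change variables $y\mapsto y^*$. With $a=v(x)-v(y)$ and $b=v^*(x)-v(y)$, this gives
\[
h_1-h_2=2\int_K\Big[\frac{F(a)}{|x-y|^{N+sp}}-\frac{F(b)}{|x-y^*|^{N+sp}}\Big]{\rm d}y .
\]
So it suffices to show $h_1-h_2\ge C(\|v\|_{L^\infty(K\cup\mathcal U)}+\delta)^{p-2}\delta$ pointwise a.e. in $\mathcal{U}$.

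\textbf{Step 2: splitting and bounding.} I write $F(b)=F(a)+(F(b)-F(a))$, which gives
\[
h_1-h_2=2\int_K F(a)\big(|x-y|^{-N-sp}-|x-y^*|^{-N-sp}\big){\rm d}y-2\int_K \frac{F(b)-F(a)}{|x-y^*|^{N+sp}}{\rm d}y .
\]
\begin{itemize}
\item For the main term, $|a|\le 2\|v\|_{L^\infty(K\cup\mathcal U)}=:M$ and $\delta>0$. The second part of Lemma \ref{2605111444} gives $F(a)\ge (p-1)(M+\delta)^{p-2}\delta$. By \eqref{2605111536}, $|x-y|^{-N-sp}-|x-y^*|^{-N-sp}\ge \beta|x-y|^{-N-sp}\ge \beta\,{\rm diam}(\mathcal U,K)^{-N-sp}$. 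Lemma \ref{2604171624} guarantees $\beta\in(0,1)$.
\item For the error term, $F(b)-F(a)=[\mathcal{P}(b+\delta)-\mathcal{P}(a+\delta)]-[\mathcal{P}(b)-\mathcal{P}(a)]$. Each bracket is controlled by the first part of Lemma \ref{2605111444}, since $|b-a|=|w(x)|$. This gives $|F(b)-F(a)|\le 2^{3-p}|w(x)|^{p-1}$. Also $|x-y^*|\ge {\rm dist}(\mathcal U,\mathcal T(K))$.
\end{itemize}
Hence, for a.e. $x\in\mathcal{U}$,
\[
h_1-h_2\ge 2|K|\Big[\beta(p-1)(M+\delta)^{p-2}\delta\,{\rm diam}(\mathcal U,K)^{-N-sp}-2^{3-p}|w(x)|^{p-1}{\rm dist}(\mathcal U,\mathcal T(K))^{-N-sp}\Big].
\]

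\textbf{Step 3: absorbing the error.} Hypothesis \eqref{2605111909} is tailored so that the error term is at most a fixed fraction of the main term. After that, $(M+\delta)^{p-2}\ge 2^{p-2}(\|v\|_{L^\infty(K\cup\mathcal U)}+\delta)^{p-2}$ converts the bound into the stated form, with $C$ depending only on $|K|,\beta,{\rm diam},{\rm dist},N,s,p$.

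\textbf{Main obstacle.} The delicate point is the bookkeeping of constants in Step 3. As I have written it, \eqref{2605111909} makes the error term exactly equal to the main term, which only gives $h_1-h_2\ge0$. If that happens, I would first try to sharpen one of the estimates: for instance, use the full weight $\min\{F(a),F(b)\}$ in the $y^*$ integral, or use $|x-y^*|^{-N-sp}\le(1-\beta)|x-y|^{-N-sp}$ in the error term, which gains a factor $1-\beta$. Either should leave a margin proportional to $\beta(p-1)(M+\delta)^{p-2}\delta$. A secondary point is to justify carefully that Lemma \ref{2605211341} applies to $v$ and $v^*$ merely as elements of $\widetilde W^{s,p}(\mathcal U)$, with $g$ an abstract element of $W_0^{s,p}(\mathcal U)'$ rather than an $L^1_{loc}$ function.
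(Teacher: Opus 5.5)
Your setup matches the paper's. You apply Lemma \ref{2605211341} on $\mathcal U$, change variables $y\mapsto y^*$, and split $h_1-h_2$ into a main term with weight $|x-y|^{-N-sp}-|x-y^*|^{-N-sp}\ge\beta|x-y|^{-N-sp}$ plus an error term with weight $|x-y^*|^{-N-sp}$. The paper's version is $G_\delta\le(1-\beta)H_\delta+2\alpha\,{\rm dist}(\mathcal U,\mathcal T(K))^{-N-sp}|K|$.

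There is, however, a genuine gap in Step 3, exactly where you suspected. With $|F(b)-F(a)|\le 2^{3-p}|w(x)|^{p-1}$, hypothesis \eqref{2605111909} makes the error bound equal to the main bound. You therefore only get $h_1-h_2\ge 0$, with no positive $C$.

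Neither of your repairs closes the gap.
\begin{itemize}
\item Inserting $|x-y^*|^{-N-sp}\le(1-\beta)|x-y|^{-N-sp}$ into the error term is a loss, not a gain. You must then bound $|x-y|^{-N-sp}$ by ${\rm dist}(\mathcal U,K)^{-N-sp}$, and the definition of $\beta$ gives $(1-\beta)\,{\rm dist}(\mathcal U,K)^{-N-sp}\ge {\rm dist}(\mathcal U,\mathcal T(K))^{-N-sp}$. If instead you keep the integrals and compare them directly, you would need $(1-\beta)\bigl({\rm dist}(\mathcal U,\mathcal T(K))/{\rm diam}(\mathcal U,K)\bigr)^{N+sp}<1$. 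In general only $\le 1$ holds.
\item Passing to $\min\{F(a),F(b)\}$ leaves the error $(F(b)-F(a))_+$. Your triangle-inequality estimate still bounds this only by $2^{3-p}|w(x)|^{p-1}$.
\end{itemize}

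What is missing is a factor $2$ in the error estimate, and it comes from the monotonicity of $\mathcal P$. Write
\[
F(b)-F(a)=[\mathcal P(b+\delta)-\mathcal P(a+\delta)]-[\mathcal P(b)-\mathcal P(a)].
\]
Since $\mathcal P$ is increasing, both brackets have the sign of $b-a$. By Lemma \ref{2605111444}, each has absolute value at most $2^{2-p}|b-a|^{p-1}=2^{2-p}|w(x)|^{p-1}$. The difference of two same-sign numbers is bounded by the larger one, not by their sum, so $|F(b)-F(a)|\le 2^{2-p}|w(x)|^{p-1}$.

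This is what the paper uses when it writes $F(b)\le F(a)+\alpha$, with $\alpha$ as in \eqref{26051119000}; indeed, \eqref{2605111909} says precisely $2^{2-p}|w(x)|^{p-1}\le\alpha$. With this sharper bound, the error term is at most half the main term. You then get
\[
h_1-h_2\ge\beta(p-1)\bigl(2\|v\|_{L^\infty(K\cup\mathcal U)}+\delta\bigr)^{p-2}\delta\,{\rm diam}(\mathcal U,K)^{-N-sp}|K|\quad\text{a.e. in }\mathcal U,
\]
which is the paper's constant. Your final conversion $(M+\delta)^{p-2}\ge 2^{p-2}(\|v\|_{L^\infty(K\cup\mathcal U)}+\delta)^{p-2}$ then finishes the proof.

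Your secondary concern is not an issue: the paper applies Lemma \ref{2605211341} in exactly the way you propose, with $g=(-\Delta)_p^s v$ (resp.\ $(-\Delta)_p^s v^*$) on $\mathcal U$.
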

\begin{proof} 
By Lemma \ref{2605211341},
\[
(-\Delta)_p^s (v-\delta {\mathbbm 1}_{K})= (-\Delta)_p^s v+H_\delta\quad \mbox{weakly in } \mathcal{U},
\]
where
\begin{equation}\label{2605111533}
H_\delta=2\int_{K}\frac{\mathcal{P}(v(x)-v(y)+\delta {\mathbbm 1}_{K}(y))-\mathcal{P}(v(x)-v(y))}{|x-y|^{N+sp}} {\rm d}y,
\end{equation}
recall that $\mathcal{P}(t)=|t|^{p-2}t$. Similarly, we have
\[
(-\Delta)_p^s (v^*-\delta {\mathbbm 1}_{\mathcal{T}(K)})= (-\Delta)_p^s v^*+G_\delta\quad \mbox{weakly in } \mathcal{U},
\]
where
\begin{equation}\label{2605111534}
\begin{aligned}
G_\delta&=2\int_{\mathcal{T}(K)}\frac{\mathcal{P}(v^*(x)-v^*(y)+\delta {\mathbbm 1}_{\mathcal{T}(K)}(y))-\mathcal{P}(v^*(x)-v^*(y))}{|x-y|^{N+sp}} {\rm d}y\\
&=2\int_{K}\frac{\mathcal{P}(v^*(x)-v(y)+\delta {\mathbbm 1}_{K}(y))-\mathcal{P}(v^*(x)-v(y))}{|x-y^*|^{N+sp}} {\rm d}y.\\
\end{aligned}
\end{equation}
Now, we denote
\begin{equation}\label{26051119000}
\alpha =\frac{\beta (p-1)(2\|v\|_{L^{\infty}(K\cup \mathcal{U})}+\delta)^{p-2} \, {\rm dist}(\mathcal{U}, \mathcal{T}(K))^{N+sp}\delta}{2{\rm diam}(\mathcal{U}, K)^{N+sp}}.
\end{equation}
By assumption \eqref{2605111909}, Lemmas \ref{2605111444}-\ref{2604171624} and $|v(x)-v^*(x)|\le (2^{p-2}\alpha)^\frac{1}{p-1}$ in $\mathcal{U}$, so that
\begin{align*}
G_\delta & \le 2\int_{K}\frac{\mathcal{P}(v(x)-v(y)+\delta {\mathbbm 1}_{K}(y))-\mathcal{P}(v(x)-v(y))+\alpha}{|x-y^*|^{N+sp}} {\rm d}y\\
& \le 2(1-\beta)\int_{K}\frac{\mathcal{P}(v(x)-v(y)+\delta {\mathbbm 1}_{K}(y))-\mathcal{P}(v(x)-v(y))}{|x-y|^{N+sp}} {\rm d}y + 2\alpha{\rm dist}(\mathcal{U}, \mathcal{T}(K))^{-N-sp}|K|\\
& = (1-\beta)H_\delta + 2\alpha{\rm dist}(\mathcal{U}, \mathcal{T}(K))^{-N-sp}|K|.
\end{align*}
Consequently, using Lemma \ref{2605111444}, there holds
\[
\begin{aligned}
H_\delta-G_\delta &\ge \beta H_\delta - 2\alpha{\rm dist}(\mathcal{U}, \mathcal{T}(K))^{-N-sp}|K|\\
&\ge 2\beta (p-1)\big(2\|v\|_{L^{\infty}(K\cup \mathcal{U})}+\delta\big)^{p-2}\delta \, {\rm diam}(\mathcal{U}, K)^{-N-sp}|K|- 2\alpha{\rm dist}(\mathcal{U}, \mathcal{T}(K))^{-N-sp}|K|\\
&=\beta (p-1)\big(2\|v\|_{L^{\infty}(K\cup \mathcal{U})}+\delta\big)^{p-2}\delta \, {\rm diam}(\mathcal{U}, K)^{-N-sp}|K|\\
&= C(2\|v\|_{L^{\infty}(K\cup \mathcal{U})}+\delta)^{p-2}\delta,
\end{aligned}
\]
where $C=\beta(p-1){\rm diam}(\mathcal{U}, K)^{-N-sp}|K|$. Therefore, weakly in $\mathcal{U}$, we have
\[
\begin{aligned}
(-\Delta)_p^s (v-\delta {\mathbbm 1}_{K})-(-\Delta)_p^s (v^*-\delta {\mathbbm 1}_{\mathcal{T}(K)})&=(-\Delta)_p^s v-(-\Delta)_p^s v^*+H_\delta-G_\delta\\
&\ge (-\Delta)_p^s v-(-\Delta)_p^s v^*+ C(\|v\|_{L^{\infty}(K\cup \mathcal{U})}+\delta)^{p-2}\delta.
\end{aligned}
\]
We finish the proof.
\end{proof}

The following result was proved in \cite[Lemma 3.5(i)]{J_ANS2018}.

\begin{lemma}\label{2605121130}
Given $N\ge 1$, $s\in (0, \frac12)$, $\frac{1}{1-s}<p<2$, an open set $U\subset \mathbb{R}^N$, and $g\in C_c^2(U)$. Then there is $C>0$ depending only on $N$, $p$ and $s$, such that for any $u\in \widetilde{W}^{s, p}(U)$ and $t\in [-1, 1]$, in weak sense
\[
\Big|(-\Delta)_p^s (u-t g)-(-\Delta)_p^s u\Big|\le C\|g\|_{C^2(\mathbb{R}^N)}^{p-1}|t|^{p-1}\quad \mbox{in } {\rm supp} (g),
\]
where $\|g\|_{C^2(\mathbb{R}^N)}=\Sigma_{|\alpha|\le 2}\|\partial^\alpha g\|_{L^{\infty}(\mathbb{R}^N)}$.
\end{lemma}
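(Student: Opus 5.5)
The statement to prove is Lemma \ref{2605121130}, i.e. \cite[Lemma 3.5(i)]{J_ANS2018}: for $g\in C^2_c(U)$ and $t\in[-1,1]$, the difference $(-\Delta)_p^s(u-tg)-(-\Delta)_p^s u$ is bounded in ${\rm supp}(g)$ by $C\|g\|_{C^2}^{p-1}|t|^{p-1}$.

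The plan is to show that the weak difference is represented by an explicit function, and to bound that function pointwise. For a test function $\varphi\in W_0^{s,p}(U)$ supported in ${\rm supp}(g)$, the weak form of the difference is
\[
\int_{\mathbb{R}^N}\int_{\mathbb{R}^N}\frac{\big(J_{u-tg}(x,y)-J_u(x,y)\big)(\varphi(x)-\varphi(y))}{|x-y|^{N+sp}}\,{\rm d}x\,{\rm d}y .
\]
By Lemma \ref{2605111444}, with $a=u(x)-u(y)$ and $\delta=-t(g(x)-g(y))$, the integrand kernel satisfies
\[
|J_{u-tg}(x,y)-J_u(x,y)|\le 2^{2-p}|t|^{p-1}|g(x)-g(y)|^{p-1}.
\]
So the difference operator is dominated, uniformly in $u$, by the kernel $k(x,y)=|g(x)-g(y)|^{p-1}|x-y|^{-N-sp}$.

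The key step is to use the antisymmetry of $J$ in $(x,y)$. Since $J_{u-tg}-J_u$ is antisymmetric, the double integral rewrites as
\[
2\int\varphi(x)\int\frac{J_{u-tg}(x,y)-J_u(x,y)}{|x-y|^{N+sp}}\,{\rm d}y\,{\rm d}x .
\]
This step needs the inner integral to converge absolutely, so that Fubini applies. Convergence would follow from
\[
\sup_{x}\int_{\mathbb{R}^N}\frac{|g(x)-g(y)|^{p-1}}{|x-y|^{N+sp}}\,{\rm d}y\le C\|g\|_{C^2}^{p-1}.
\]
To prove this bound, split the $y$-integral into $|x-y|<1$ and $|x-y|\ge1$.
\begin{itemize}
\item On $|x-y|\ge1$, use $|g(x)-g(y)|\le 2\|g\|_\infty$; the tail $\int_{|z|\ge 1}|z|^{-N-sp}\,{\rm d}z$ is finite.
\item On $|x-y|<1$, use $|g(x)-g(y)|\le\|\nabla g\|_\infty|x-y|$. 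The integrand is then bounded by $\|\nabla g\|_\infty^{p-1}|x-y|^{p-1-N-sp}$, which is integrable near $x$ iff $p-1-sp>0$, i.e. $p>\frac{1}{1-s}$. This is exactly the hypothesis.
\end{itemize}
The Lipschitz bound alone suffices here, so there is no need to pass to a principal value with second derivatives. (Such a principal value would be needed only if $p\le\frac{1}{1-s}$; then $C^2$ symmetrization would still fail because $J$ is not linear.) The condition $s<\frac12$ is what makes the range $\frac{1}{1-s}<p<2$ nonempty.

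With this pointwise bound, the function
\[
h(x)=2\int\frac{J_{u-tg}(x,y)-J_u(x,y)}{|x-y|^{N+sp}}\,{\rm d}y
\]
lies in $L^\infty$ with $|h|\le 2\cdot2^{2-p}C|t|^{p-1}\|g\|_{C^2}^{p-1}$, and it represents the difference weakly in ${\rm supp}(g)$.

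The main obstacle is justifying Fubini and the antisymmetrization. One must check that the double integral converges absolutely, given only $u\in\widetilde W^{s,p}(U)$ and $u-tg\in\widetilde W^{s,p}(U)$ (the latter holds since $g$ is smooth with compact support). The plan is to bound the double integral by
\[
\int\int|t|^{p-1}k(x,y)\,|\varphi(x)-\varphi(y)|\,{\rm d}x\,{\rm d}y .
\]
Split it by symmetry into the region where $x\in{\rm supp}\,\varphi$ and the region where $y\in{\rm supp}\,\varphi$. On each region, use $|\varphi(x)-\varphi(y)|\le|\varphi(x)|+|\varphi(y)|$ together with the uniform $L^\infty$ bound on $\int k(x,\cdot)\,{\rm d}y$ just proved, and finally $\varphi\in L^1_{loc}$. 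This gives absolute convergence and completes the proof.
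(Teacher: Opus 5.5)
Your proof is correct and is essentially the argument behind the cited \cite[Lemma 3.5(i)]{J_ANS2018}, which the paper quotes without proof. You use the pointwise bound $|J_{u-tg}-J_u|\le 2^{2-p}|t|^{p-1}|g(x)-g(y)|^{p-1}$ from Lemma~\ref{2605111444}, then a uniform bound on $\int_{\mathbb{R}^N}|g(x)-g(y)|^{p-1}|x-y|^{-N-sp}\,{\rm d}y$, valid exactly when $p-1>sp$ (the mechanism the paper's remark after Theorem~\ref{2605211200} points to), and finally antisymmetrization with Fubini to identify the difference with a bounded function whose constant depends only on $N,p,s$, as the rescaled barriers $g_k$ in the proof of Theorem~\ref{2605211200} require.
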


Now, we are in position to prove the strong comparison for continuous weak solutions to \eqref{2412171452}. 
\begin{proof}[\bf Proof of Theorem \ref{2605211200}]
Assume that $u\not\equiv u^*$ in $\mathbb{R}^N$, and assume by contradiction that $u(x_0)=u^*(x_0)$ for some $x_0\in U$.
There exists a compact set $K\Subset \mathbb{R}^N_+$ with positive measure such that $x_0\not\in K$ and
\[
\gamma_K:=\underset{{K}}{\min}\, (u-u^*)>0.
\]
Fix a sequence $0<r_k<\frac14 {\rm dist}(x_0, U^c\cup K)$ such that $\lim_{k\to\infty} r_k = 0$. As $u$ is continuous, we have
$$\underset{B_{r_k}(x_0)}{\sup}\, (u-u^*)\to 0\quad \mbox{as } k\to\infty.$$
Fix $0<\delta\le \frac12\min\big\{\gamma_K, 1\big\}$. Apply Lemma \ref{2604131506} with $\mathcal{U}=B_{2r_k}(x_0)$ for $r_k$ sufficiently small such that \eqref{2605111909} holds, this is possible since the right hand side of \eqref{2605111909} tends to a positive constant when $r_k \to 0$. Then there exist $C_k>0$ such that
\begin{equation}\label{2605131922}
(-\Delta)_p^s z_\delta-(-\Delta)_p^s z_{\delta}^*\ge C_k\delta +V(x)(u-u^*)\quad \mbox{in}\,\, B_{2r_k}(x_0),
\end{equation}
where $z_\delta=u-\delta{\mathbbm 1}_K$. Seeing the proof of Lemma \ref{2604131506}, we can suppose that $C_k\ge C>0$.

\smallskip
It is clear that 
\[
z_\delta-z_\delta^*=u-u^*-\delta{\mathbbm 1}_K+\delta{\mathbbm 1}_{\mathcal{T}(K)}\ge 0 \quad \mbox{in}\,\, \mathbb{R}_+^N.
\]
Next, let $g\in C_c^{2}(B_{2})$ be a cut-off function such that ${\mathbbm 1}_{B_1}\le g\le {\mathbbm 1}_{B_2}$.
 Take
\[
g_k(x)=g\Big(\frac{x-x_0}{r_k}\Big).
\]
For any $1\ge\varepsilon>0$, set $h_{\delta, \varepsilon, k}=z_\delta-\varepsilon g_k$. 
It follows from Lemma \ref{2605121130} that
\begin{equation}\label{2605131924}
(-\Delta)_p^s h_{\delta, \varepsilon, k}\ge  (-\Delta)_p^s z_\delta- C'{r_k}^{2(1-p)}\varepsilon^{p-1}\quad  \mbox{in }\; {\rm supp} (g_k)\subset B_{2r_k}(x_0).
\end{equation}
We will provide an upper bound for $(-\Delta)_p^s h_{\delta, \varepsilon,k}^*$ in ${\rm supp} (g_k)$. Since ${\rm dist}({\rm supp} (g_k), \mathcal{T}(K))>0$, by virtue of  Lemma \ref{2605211341}, there holds
\begin{equation}\label{2605131920}
(-\Delta)_p^s h_{\delta, \varepsilon, k}^*=  (-\Delta)_p^s z_\delta^*+ H_{\delta, \varepsilon, k}\quad  \mbox{in }\; {\rm supp} (g_k),
\end{equation}
where
\begin{equation*}
\begin{aligned}
H_{\delta, \varepsilon, k}(x)&=2\int_{{\rm supp} (g_k^*)}\frac{\mathcal{P}(z^*_\delta(x)-z^*_\delta(y)+\varepsilon g_k^*(y))-\mathcal{P}(z^*_\delta(x)-z^*_\delta(y))}{|x-y|^{N+sp}} {\rm d}y.
\end{aligned}
\end{equation*}
Using Lemma \ref{2605111444}, for $x\in {\rm supp} (g_k)$, we have
\begin{equation}\label{2605131921}
\begin{aligned}
H_{\delta, \varepsilon, k}(x)&\le 2^{3-p}\varepsilon^{p-1}\int_{{\rm supp}(g_k^*)}\frac{1}{|x-y|^{N+sp}} {\rm d}y\\
&\le 2^{3-p}\varepsilon^{p-1}{\rm dist}\big({\rm supp}(g_k), {\rm supp} (g_k^*)\big)^{-N-sp}|{\rm supp}(g_k^*)|\\
&\le 2^{N+sp+3-p}|x_0-x_0^*|^{-N-sp}|B_{2r_k}(x_0)|\varepsilon^{p-1}\\
&=C''|x_0-x_0^*|^{-N-sp}r_k^N\varepsilon^{p-1},
\end{aligned}
\end{equation}
where $C''=C''(N, p, s)>0$.
It follows from \eqref{2605131920} and \eqref{2605131921}  that
\begin{equation}\label{2605131923}
(-\Delta)_p^s h_{\delta, \varepsilon,k}^*\le (-\Delta)_p^s z_\delta^*+ C''|x_0-x_0^*|^{-N-sp}r_k^N\varepsilon^{p-1}\quad\mbox{weakly in }\; {\rm supp}(g_k).
\end{equation}
Finally, we get from \eqref{2605131922}, \eqref{2605131924} and \eqref{2605131923} that in ${\rm supp} (g_k)$, 
\begin{align*}
    (-\Delta)_p^s h_{\delta, \varepsilon, k}-(-\Delta)_p^s h_{\delta, \varepsilon, k}^* &\ge C\delta-C'{r_k}^{2(1-p)}\varepsilon^{p-1} -C''|x_0-x_0^*|^{-N-sp}r_k^N\varepsilon^{p-1}\\
& \quad +V(x)(u-u^*).
\end{align*}

Let now $\varepsilon_k = o(r_k^2) >0$ be small enough such that
\[
C\delta-C'{r_k}^{2(1-p)}\varepsilon_k^{p-1}-C''|x_0-x_0^*|^{-N-sp}r_k^N\varepsilon_k^{p-1}\ge \frac{C\delta}2 \ge  \varepsilon_k\|V\|_{L^\infty({\rm supp} (g_k))}.
\]
In ${\rm supp}(g_k)$, there holds
\[
\begin{aligned}
(-\Delta)_p^s h_{\delta, \varepsilon_k, k}-(-\Delta)_p^s h_{\delta, \varepsilon_k, k}^* \ge \frac{C\delta}2 +V(x)(u-u^*)
&\ge V(x)(u-u^*-\varepsilon_k g_k)\\
&=V(x)(h_{\delta, \varepsilon_k, k}-h_{\delta, \varepsilon_k, k}^*).
\end{aligned}
\]
Using Theorem \ref{2508070852} with small $r_k$, we arrive at 
$$h_{\delta, \varepsilon_k, k}-h_{\delta, \varepsilon_k, k}^*=u-u^*-\varepsilon_k \ge 0 \quad \mbox{in } B_{r_k}(x_0),$$
which contradicts the assumption  $u(x_0)=u^*(x_0)$. The hypothesis was wrong and the proof is completed.
\end{proof}

\section{Applications}\label{2606091358}
As an example of applications, by the moving-plane method, we show some symmetry result, that is, Theorem \ref{2602111215} and Corollary \ref{2603271652}. The main steps to carry out the moving-plane procedure are very similar to that in \cite[Theorem 1.5]{D_AIHP1998} (see also \cite[Section 4]{JW_AMPA2016} and \cite[Theorem 1.3]{BN_BSBM1991}), hence we omit a lot of generic details, and just mention some special points.

\begin{proof}[\bf Proof of Theorem \ref{2602111215}] Assume first that $u\not\equiv 0$. Similar to \cite[Theorem 1.5]{D_AIHP1998} and \cite[Section 4]{JW_AMPA2016}, we make use of the weak comparison principle, Theorem \ref{2508070852} to get the monotonicity of solutions near the boundary, and start the moving-plane. Furthermore, we shall prove that the moving-plane will reach the position $x_N= 0$. This step  essentially originated from the proof of \cite[Theorem 1.3]{BN_BSBM1991}, where the crucial argument is to apply strong and weak maximum principles in a bounded domain. For the nonlocal or quasi-linear setting, we can follow the same idea, and we refer to the proofs of \cite[Theorem 1.5]{D_AIHP1998} and \cite[Section 4]{JW_AMPA2016}. In particular, the strong comparison principle in Theorem \ref{2605211200} plays the role of \cite[Theorem 1.4]{D_AIHP1998} and \cite[Proposition 3.6]{JW_AMPA2016} respectively.
\end{proof}

\begin{proof}[\bf Proof of Corollary \ref{2603271652}]
Since $s\in (0, \frac12)$ and $p\in (\frac{1}{1-s}, 2]$,  we have $sp<1\le N$. With the assumption \eqref{2608241033}, using \cite[Theorem 3.3]{CMS_JFA2018}, any weak solution to \eqref{main} is bounded and continuous. So any non-negative weak solution $u$ will satisfy the requirement of Theorem \ref{2602111215}. Hence the solution $u$ is radially symmetric and strictly decreasing with respect to the radius.
\end{proof}

\bigskip
\noindent
{\bf Acknowledgements.} D.Y. is partially supported by NSFC (No.~12271164) and Science and Technology Commission of Shanghai Municipality (No.~22DZ2229014). W.Z. is partially supported by NSFC (No.~12601212).

\medskip
\noindent{\bf Data availability.}
Data sharing is not applicable to this work as no new data were created or analyzed in the study.

\medskip
\noindent{\bf Conflict of interest statement.}
There is no conflict of interest relevant to
this article.


\begin{thebibliography}{777}

\bibitem{A_AMPA1961} A.D. Alexandrov, A characteristic property of the spheres, {\it Ann. Math. Pura Appl.}, 58 (1962), 303-315.


\bibitem{BMS_JAM2018} B. Barrios, L. Montoro and B. Sciunzi, On the moving plane method for nonlocal problems in bounded domains,
{\it J. Anal. Math.}, 135 (2018), 37-57.

\bibitem{BN_BSBM1991} H. Berestycki and L. Nirenberg, On the method of moving planes and the sliding method,
{\it Bol. Soc. Bras. Mat.}, 22 (1991), 1-37.

\bibitem{BRS_arXiv2026} A. Biswas, S. Roy and  A. Sen, Strong comparison principle and symmetry results for the fractional $p$-Laplacian, (2026), arXiv:2606.08559.

\bibitem{BT_APDE2025} A. Biswas and E. Topp, Lipschitz regularity of fractional $p$-Laplacian, {\it Ann. PDE}, 11 (2025), paper no. 27.


\bibitem{BDLMS_CVPDE2025} V. B\"ogelein, F. Duzaar, N. Liao, G. Molica  Bisci and R. Servadei, Gradient regularity for $(s, p)$-harmonic functions, {\it Calc. Var. Partial Differential Equations}, 64 (2025), paper no. 253.

\bibitem{BDLM_JMPA2025} V. B\"ogelein, F. Duzaar, N. Liao and K. Moring, Gradient estimates for the fractional $p$-Poisson equation, {\it J. Math. Pures Appl.}, 204 (2025), paper no. 103764.

\bibitem{BL_AM2017} L. Brasco and E. Lindgren,  Higher Sobolev regularity for the fractional $p$-Laplace equation in the superquadratic case, {\it Adv. Math.}, {304} (2017), 300-354.

\bibitem{BLS_AM2018} L. Brasco, E. Lindgren and A. Schikorra,   {Higher H\"older regularity for the fractional $p$-Laplacian in the superquadratic case}, {\it Adv. Math.}, {338} (2018), 782-846.

\bibitem{B_PIAS2000} F. Brock, Continuous rearrangement and symmetry of solutions of elliptic problems, {\it Proc. Indian Acad. Sci.
(Math. Sci.)}, 110 (2000), 157-204.

\bibitem{CL_AM2018} W. Chen and C. Li, Maximum principles for the fractional $p$-Laplacian and symmetry of solutions, {\it Adv. Math.}, 335 (2018), 735-758.

\bibitem{CLL_AM2017} W. Chen, C. Li and Y. Li, A direct method of moving planes for the fractional
Laplacian, {\it Adv. Math.}, 308 (2017), 404-437.

\bibitem{CMS_JFA2018} W. Chen, S. Mosconi and M. Squassina, Nonlocal problems with critical Hardy nonlinearity,
 {\it J. Funct. Anal.}, 275 (2018), 3065-3114.
 
\bibitem{D_AIHP1998} L. Damascelli, Comparison theorems for some quasilinear degenerate elliptic operators and applications to symmetry and monotonicity results, {\it Ann. Inst. H. Poincar\'e. Analyse non lin\'eaire}, 15 (1998), 493-516.
 
\bibitem{DP_ASNSPCS1998} L. Damascelli and F. Pacella, Monotonicity and symmetry of solutions of $p$-Laplace equations, $1<p<2$; via the moving plane method, {\it Ann. Scuola Norm. Sup. Pisa Cl. Sci.}, 26 (1998), 689-707.


\bibitem{DS_JDE2004} L. Damascelli and B. Sciunzi,  Regularity, monotonicity and symmetry of positive solutions of $m$-Laplace equations, {\it J. Differential Equations}, 206 (2004), 483-515.

\bibitem{DS_CVPDE2006} L. Damascelli and B. Sciunzi, Harnack inequalities, maximum and comparison principles, and regularity of positive solutions of $m$-Laplace equations, {\it Calc. Var. Partial Differential Equations}, 25 (2006), 139-159.

\bibitem{dLN_JMPA1982} D. de Figueiredo, P. Lions and R. Nussbaum, A priori estimates and existence of positive solutions of semilinear elliptic equations, {\it J. Math. Pures Appl.}, 61 (1982),  41-63.

\bibitem{DN_APDE2025} L. Diening and S. Nowak,  Calder\'on-Zygmund estimates for the fractional $p$-Laplacian, {\it Ann. PDE}, 11 (2025), paper no. 6.

\bibitem{DPV_BSM12} 
\newblock E. Di Nezza, G. Palatucci and E. Valdinoci, 
\newblock  {Hitchhiker's guide to the fractional Sobolev spaces}, 
\newblock \emph{Bull. Sci. Math.}, {136} (2012), 521-573.

\bibitem{GL_MA2024} P. Garain and E. Lindgren, 
Higher H\"older regularity for the fractional $p$-Laplace equation in the subquadratic case,
{\it Math. Ann.}, 390 (2024),  5753-5792.

\bibitem {GYZ_CAM2002} Y. Ge,  D. Ye and F. Zhou,  Comparison, symmetry and monotonicity results for some degenerate elliptic operators in Carnot-Carath\'eodory spaces, {\it Chinese Ann. Math. Ser. B}, 23 (2002), 361-372.


\bibitem{GNN_CMP1979} B. Gidas, W. Ni and L. Nirenberg,  Symmetry and related properties via the maximum principle, {\it Comm. Math. Phys.}, 68 (1979), 209-243.

\bibitem{GJS_arXiv2025} D. Giovagnoli, D. Jesus and L. Silvestre, $C^{1+\alpha}$-regularity for fractional $p$-harmonic equations, (2025), arXiv:2509.26565.

\bibitem{GV_NA1989} M. Guedda and L. V\'eron, Quasilinear elliptic equations involving critical Sobolev exponents, {\it Nonlinear Anal.},
13 (1989), 879-902.

\bibitem{GP_JAM2025} Y. Guo and S. Peng, Maximum principles and Direct methods for fractional Hardy operator and applications, {\it J. Anal. Math.}, 156 (2025), 171-211.

\bibitem{IMP_MN23} A. Iannizzotto, S. Mosconi and N. Papageorgiou, On the logistic equation for the fractional p-Laplacian. {\it Math. Nachr.} 296(4) (2023), 1451-1468.

\bibitem{IMS_RMI2016} A. Iannizzotto, S. Mosconi and M. Squassina,  Global H\"{o}lder regularity for the fractional $p$-Laplacian, {\it Rev. Mat. Iberoam.},  32 (2016), 1353-1392.

\bibitem{IM_RMI1989} T. Iwaniec and J. Manfredi, Regularity of $p$-harmonic functions on the plane, {\it Rev. Mat. Iberoam.}, 5 (1989), 1-19.

\bibitem{J_ANS2018} S. Jarohs, Strong comparison principle for the
fractional $p$-Laplacian and applications to
starshaped rings, {\it Adv. Nonlinear Stud.},  18 (2018), 691-704.

\bibitem{JW_AMPA2016} S. Jarohs and T. Weth,  
Symmetry via antisymmetric maximum principles in nonlocal problems of variable order,
{\it Ann. Mat. Pura Appl.}, 195 (2016),  273-291.

\bibitem{KS_N1990} S. Kichenassamy and J. Smoller, On the existence of radial solutions of quasi-linear elliptic equations, {\it Nonlinearity},  3 (1990), 677-694.

\bibitem{KKL_JMPA2019} J. Korvenp\"a\"a, T. Kuusi and E. Lindgren, Equivalence of solutions to fractional $p$-Laplace type equations, {\it J. Math. Pures Appl.}, 132 (2019), 1-26.

\bibitem{LS_ArXiv2026} H. Lee and K. Song, Self-improving properties for the fractional $p$-Laplacian via nonlinear commutators, (2026), arXiv:2606.26610.

\bibitem{M_PAMS88} J. Manfredi, $p$-harmonic functions in the plane. Proc. Amer. Math. Soc. 103(2) (1988), 473-479.

\bibitem{RS_CVPDE2014} X. Ros-Oton and J. Serra, The extremal solution for the fractional Laplacian, {\it Calc. Var. Partial Differential Equations}, 50 (2014), 723-750.

\bibitem{S_ARMA1971} J. Serrin, A symmetry problem in potential theory, {\it Arch. Rational Mech. Anal.}, 43 (1971), 304-318.

\bibitem{T_CPDE1983} P. Tolksdorf, On the Dirichlet problem for quasilinear equations in domain with conical boundary points, {\it Comm. Partial Differential Equations}, 8 (1983), 773-817.

\end{thebibliography}
\end{document}